\newtheorem{theorem}{Theorem}[section]
\newtheorem{definition}[theorem]{Definition}
\newtheorem{lemma}[theorem]{Lemma}
\newtheorem{proposition}[theorem]{Proposition}
\newtheorem{remark}[theorem]{Remark}
\newcommand*{\rom}[1]{\expandafter\@slowromancap\romannumeral #1@}
\newcommand{\bd}{\mathrm{d}}
\newcommand{\grad}{\nabla_{x}}
\newcommand{\vdot}{v\!\cdot\!\grad}
\newcommand{\Q}{\mathcal{Q}}
\newcommand{\xmus}{ |\mathbf{n} \cdot v| \bd v \bd \sigma_x \bd s }
\title[Incoming data]{Global existence of renormalized solutions to Boltzmann equations with incoming boundary condition and non-cutoff kernel }
\author[N. Jiang]{Ning Jiang}
\address[Ning Jiang]{\newline School of Mathematics and Statistics, Wuhan University, Wuhan, 430072, P. R. China}
\email{njiang@whu.edu.cn}
\author[X. Zhang]{Xu Zhang}
\address[Xu Zhang]{\newline School of Mathematics and Statistics, Wuhan University, Wuhan, 430072, P. R. China}
\email{xuzhang889@whu.edu.cn}
\begin{document}
	\begin{abstract}
		We prove the existence of global renormalized solutions to the Boltzmann equation in bounded domain with incoming boundary condition, with non-cutoff collision kernels. Thus we extend the results of  \cite{villani2002noncutoff} for whole spaces or periodic domain to bounded domains endorsed with incoming boundary condition.
	\end{abstract}
\maketitle	
\tableofcontents

\section{Introduction}

The Boltzmann equation (or Maxwell-Boltzmann ) equation is an integro-differentiable equation
\begin{equation}\label{BE}
  \partial_t f + \vdot f = \Q(f,f)\,,
\end{equation}
which models the statistical evolution of a rarefied gas. In equation \eqref{BE}, $f(t,x,v)$ is a non-negative measurable function, which denotes the number density of the gas molecules at time $t \geq 0$, at the position $ x \in \Omega$, with velocity $v \in \mathbb{R}^N$, ($N \geq 2$). Here $\Omega$ denotes the whole space $\mathbb{R}^N$, or a torus $\mathbb{T}^N$, or  a bounded domain in $\mathbb{R}^N$. Furthermore, $\Q(f, f)$ is the collision operator whose structure is described below.

In this work, the Boltzmann equation \eqref{BE} is given an initial data which satisfies some natural physical bounds (bounded mass, momentum, energy and entropy, etc.). More specifically,
\begin{equation}\label{IC-1}
  f|_{t=0} = f_0(x,v)\quad \mbox{in}\quad\! \Omega \times \mathbb{R}^N\,,
\end{equation}
which satisfies
\begin{equation}\label{IC-bounds}
  f_0 \geq 0\quad \mbox{a.e.}\quad \mbox{and}\quad\! \iint_{\Omega\times\mathbb{R}^N}f_0(1+ |x|^2 + |v|^2 + |\log f_0|)\,\bd x\bd v < \infty\,.
\end{equation}

The well-posedness of the Boltzmann equation \eqref{BE} is a fundamental problem in mathematical physics. Besides many results on the smooth solutions which required the initial data $f_0$ is ``small" in some functional spaces, the first global in time solution with ``large" data, i.e. the initial data $f_0$ satisfies \eqref{IC-bounds}: only some finite physical bounds, without any smallness requirements on the size of $f_0$, was proved in the celebrated DiPerna-Lions' theorem \cite{lions1989cauchy} for $\Omega = \mathbb{R}^N$ (with some minor modifications, their proof works also for torus $\mathbb{T}^N$ ).

Since in the natural functional spaces of the number density $f(t,x,v)$, say $L^1\cap L\log L$, the collision term $\Q(f,f)$ in \eqref{BE} is not even locally integrable, which makes weak solutions to the Boltzmann equation can not be defined in the usual sense. In stead, under the Grad's angular cutoff assumption and a mild decay condition on the collision kernel which we will describe in details later, DiPerna and Lions defined the so-called renormalized solutions to the \eqref{BE} and prove that a sequence of renormalized solutions which satisfy only the physically natural a priori bounds converge weakly in $L^1$. From this stability they deduced global existence of renormalized solutions.

In \cite{villani2002noncutoff}, Alexandre and Villani studied the Boltzmann equation without Grad's angular cutoff assumption. They introduced a new renormalized formulation that allows the cross section to be singular in both the angular and the relative velocity variables, which occur in long-range interactions and soft potentials in particular Coulomb interaction. Together with some new estimates, they prove global existence of renormalized solutions with defect measure. Again, Alexandre-Villani's results were proved for the whole space case.

Since the obvious importance in applications and theoretical research, extending the global existence results of DiPerna-Lions \cite{lions1989cauchy} and Alexandre-Villani \cite{villani2002noncutoff} for whole space (and torus) to the domain with boundary is a natural open question since then. A first complete answer in this direction is due to Mischler \cite{mischler2010asens}, who proved global renormalized solution of the Boltzmann equation with cutoff collision kernels in a bounded domain endowed with Maxwell reflection boundary condition, based on some new observations on weak-weak convergence and his previous results on the traces of kinetic equations \cite{mischler2000vlasovtrace, mischler2000-cmp}. Maxwell in \cite{Maxwell} proposed this boundary condition, which stated that the gas molecules back to the domain at the boundary come one part from the specular reflection of the molecules escaping the domain, the other part from those entering the wall, interacting with the molecules in the wall, and re-evaporating back to the domain with the thermal dynamical equilibrium state of the wall. This boundary condition for the Boltzmann equation can be viewed as an analogue of the Robin condition for the macroscopic equations. In fact, based on Mischler's global renormalized solutions constructed in \cite{mischler2010asens}, incompressible Navier-Stokes equations with several boundary conditions can be justified \cite{M-S, SRM-LNM-2009, Jiang-Masmoudi-CPAM}. In a forthcoming paper, we extend Mischler's result to the non-cutoff collision kernel case.

Another boundary condition for the kinetic equation is more direct: the number density of the gas molecules back to the domain is prescribed. This is the so-called incoming boundary condition, which has been widely used in applied fields. For more asymptotic analysis, including how to derive the boundary conditions for the incompressible Navier-Stokes equations from the Boltzmann equation with incoming data, see Sone's books \cite{Sone, Sone2}. To prove global renormalized solutions to the Boltzmann equations with incoming boundary condition, for both the cutoff and non-cutoff collision kernels, is the main concern of the current paper. We first introduce more detailed information on the Boltzmann equation in particular the collision kernels and the boundary conditions so that we can state our main results precisely.

\subsection{Collision kernel of the Boltzmann equation}

In the Boltzmann equation \eqref{BE}, $\Q$ is the Boltzmann collision operator, which acts only on the velocity dependence of $f$ quadratically:
\begin{equation}\label{collision-original}
\begin{split}
\Q(f,f)=&\int_{\mathbb{R}^N\times\mathbb{S}^{N-1}}(f'f'_*-ff_*) b(v-v_*,\omega) \bd v_*\bd\omega\,,
\end{split}
\end{equation}
where $f'=f(v')$, $f'_*= f(v'_*)$, $f_*= f(v_*)$ ($t$ and $x$ are only parameters), and the formulae
\begin{align*}
\begin{cases}
v' = \frac{v + v_*}{2} + \frac{|v-v_*|}{2}\omega \\
v_*'= \frac{v + v_*}{2} - \frac{|v-v_*|}{2}\omega\,,
\end{cases}
\end{align*}
yield a parametrization of the set of solutions to the conservation laws of elastic collision
\begin{align*}
\begin{cases}
 v+ v_* = v' + v'_* \\
 |v|^2 + |v_*|^2 =    |v'|^2 + |v'_*|^2\,.
\end{cases}
\end{align*}
Here $v$ and $v_*$ denote the velocities of two particle before the elastic collision, and $v'$ and $v_*'$ denotes the post-collision velocities.  The nonnegative and a.e. finite weight function $b(v-v_*, \omega)$, called {\it cross-section}, is assumed to depend only on the relative velocity $|v-v_*|$ and cosine of the derivation angle  $ (\frac{v-v_*}{|v-v_*|}, \omega)$. For a given interaction model, the cross section can be computed in a semi-explicit way by solving a classical scattering problem, see for instance, \cite{Cercignani88}. A typical example is that in dimension 3, for the inverse $s\mbox{-}$power repulsive forces (where $s > 1$ is the exponent of the potential), if denoted by $\kappa = \frac{v -  v_*}{| v - v_*|}$ and $\omega = \frac{v' -  v_*'}{| v' - v_*'|}$,
\begin{equation}\label{s-power-1}
b(v-v_*, \omega) = |v-v_*|^\gamma b(\kappa \cdot \omega) = |v-v_*|^\gamma b(\cos \theta)\,,\quad \gamma = \tfrac{s-5}{s-1}\,,
\end{equation}
and
\begin{equation}\label{s-power-2}
 \sin\theta b(\cos\theta) \approx K \theta^{-1-s'}\quad\text{as}\quad\!\theta \to 0\,,\quad\text{where}\quad\! s'=\tfrac{2}{s-1}\quad\!\text{and}\quad\! K >0\,.
\end{equation}
Notice that, in this particular situation, $b(z,\omega)$ is not locally integrable, which is not due to the specific form of inverse power potential. In fact, one can show (see \cite{Villani-review}) that a non-integrable singularity arises if and only forces of infinite range are present in the gas. Thus, some assumptions must be made on the cross section to make the mathematical treatment of the Boltzmann equation convenient.

There are basically two types of assumptions on the cross section. The main assumption by DiPerna and Lions in \cite{lions1989cauchy} on the cross section was {\em Grad's angular cutoff}, namely, that {the cross section be integrable}, locally in all variables. More precisely, they assumed
\begin{equation}\label{Grad-cutoff-1}
  A(z)= \int_{S^{N-1}}b(z,\omega)\,\bd \omega \in L^1_{loc}(\mathbb{R}^N)\,,
\end{equation}
together with a condition of mild growth of $A$:
\begin{equation}\label{Grad-cutoff-2}
  (1+|v|^2)^{-1}\int_{|z-v|\leq R}A(z)\,\bd z\to 0\quad\text{as}\quad\! |v|\to \infty\,,\quad\text{for all}\quad\! R<\infty\,.
\end{equation}

However, although the Grad's angular cutoff assumption \eqref{Grad-cutoff-1} has been widely used in this field, it is not satisfactory from the physical point of view. Indeed, as soon as one consider {\em long range} interactions, even with a very fast decay at infinity, this assumption is not satisfied. A typical example is the that of inverse $s\mbox{-}$power repulsive forces in dimension 3 mentioned before. The function $\sin\theta b(\cos\theta)$ in \eqref{s-power-2} presents a {\em non-integrable} singularity as $\theta\to 0$. This regime corresponds to {\em grazing collisions}, i.e. collisions in which particles are hardly deviated. Physically speaking, these are the collisions between particles that are microscopically very far apart, with a large impact parameter. Another complication arises when dealing with the Coulomb potential: For $s=2$ in dimension $N=3$ as in \eqref{s-power-1}, one finds a cross-section behaving like $|v-v_*|^{-3}$ in the relative velocity variable, hence {\em not locally integrable} as a function of the relative velocity (this is called kinetic singularity). The DiPerna-Lions formulation can not handle this case, which is one of the most important from a physical point of view.

In \cite{villani2002noncutoff}, Alexandre and Villani employed several new tools to treat both angular and kinetic singularities and extended the DiPerna-Lions theory to very general, physically realistic long-range interactions, including the Coulomb potential as a limit case. For the readers' convenience, we list below the non-cutoff assumptions made in \cite{villani2002noncutoff} on the cross-section:

\begin{enumerate}

\item  Borderline singularity assumption.    Assume that the cross section has the following decomposition:
\begin{align}\label{AV-first}
b(z,\omega) = \frac{\beta_0(\kappa\cdot \omega)}{|z|^3} + B_1(z, \omega),~~ \kappa = \frac{z}{|z|},
\end{align}
for some nonnegative measurable functions $\beta_0$ and $B_1$, and define
\begin{align}
\mu_0 = \int_{S^2} \beta_0(\kappa \cdot \omega)(1- \kappa\cdot \omega) \,\bd\omega\,,\\
M_1(|z|) = \int_{S^2} B_1(z, \omega)(1- \kappa\cdot \omega)\,\bd\omega\,,\\
M_1'(|z|) = \int_{S^2} B_1'(z, \omega)(1- \kappa\cdot \omega)\,\bd\omega\,,
\end{align}
where
\[ B_1'(z,\omega) = \sup\limits_{1 < \lambda \le \sqrt{2}} \frac{B_1(\lambda z, \omega ) -B_1(z, \omega)}{(\lambda - 1) |z|}. \]
We require that
\[ \mu_0 < +\infty\quad \text{and}\quad  M_1(|z|),~|z|M_1'(|z|) \in L^1_{loc}(\mathbb{R}^3).  \]

\item Behavior at infinity assumption.  For $0\le \alpha \le 2$, let
\begin{align}
M^\alpha(|z|) = \int_{S^2} b(z, \omega)(1- \kappa\cdot \omega)^\frac{\alpha}{2}\,\bd\omega,~~ \kappa = \frac{z}{|z|}.
\end{align}
We require that for some $\alpha \in [0,2]$, as $|z| \to \infty$,
\begin{align}
M^\alpha(|z|) = o(|z|^{2-\alpha})\,,\quad\text{and}\quad\!|z|M'(|z|) = o(|z|^2).
\end{align}

\item  Angular singularity assumption.
\begin{align}
B(z,\omega) \ge \Phi_0(|z|)b_0(\kappa\cdot \omega),~~\kappa = \frac{z}{|z|},
\end{align}
where $\Phi_0$ is a continuous function,~ $\Phi_0(|z|)>0$ if $|z| \neq 0$, and
\begin{align}\label{AV-last}
\int_{S^2} b_0(\kappa \cdot \omega) = \infty.
\end{align}
\end{enumerate}

For the inverse $s\mbox{-}$power repulsive forces in dimension 3, the above three assumptions together allow the following range of parameters:
 \[ \gamma \ge -3,~~ 0 \le s' < 2,~~ s' + \gamma < 2.  \]

Note that when $s=2$, $\gamma= -3$, which corresponds to Coulomb interaction. However, the limiting case $s=2$ is not suited for Boltzmann equation as the Boltzmann collision operator should be replaced by the Landau operator in
order to handle that situation (see \cite{Villani-review}). We will consider the boundary problem for Landau equation in a separate paper.

\subsection{Boundary conditions}

As mentioned before, the main concern of the current paper is to extend   Alexandre-Villani \cite{villani2002noncutoff} theories for  non-cutoff cross section  to
the bounded domain with incoming boundary condition.

Let $\Omega$ be an open and bounded subset of $\mathbb{R}^N$ and set $\mathcal{O}=\Omega\times\mathbb{R}^3$ and $\mathcal{O}_T=(0,T)\times\Omega\times\mathbb{R}^3$. We assume that the boundary $\partial\Omega$ is sufficiently smooth.
The regularity that we need is that there exists a vector field $\mathrm{n}\in W^{2,\infty}(\Omega\,;\mathbb{R}^N)$ such that $\mathrm{n}(x)$ coincides with the outward unit normal vector at $x\in\partial\Omega$. We define $\Sigma^x_{\pm}:= \{v\in \mathbb{R}^N\,; \pm v\cdot\mathrm{n}(x) > 0\}$ the sets of outgoing ($\Sigma^x_+$) and incoming ($\Sigma^x_-$) velocities at point $x\in \partial \Omega$ as well as $\Sigma = \partial\Omega \times \mathbb{R}^N$ and
\begin{equation}\nonumber
\Sigma_{\pm}=\{(x,v)\in\Sigma:\pm v \!\cdot\!\mathrm{n}(x) >0\} = \{(x,v)\,; x\in \partial \Omega\,, v\in \Sigma^x_{\pm}\}\,.
\end{equation}
We also denote by  $\mathrm{d}\sigma_x $ the Lebesgue measure on  ${\partial\Omega}$.

The boundary condition considered in this paper is that the number density on the incoming to the domain is prescribed. More precisely, denoted by
$\gamma f$ be the trace of the number density (provided the trace can be defined), and let $\gamma_{\pm}f = \mathbf{1}_{(0,\infty)\times\Sigma_{\pm}}\gamma f$. The so-called incoming boundary condition is that
\begin{align}\label{boundary-conditions-incoming}
\gamma_-f =g\,,
\end{align}
where $g \geq 0$ is a non-negative measurable function and satisfies
\begin{equation}\label{g-bound}
  \int^T_0\int_{\Sigma_-} g(1+ |v|^2 + |\log g|)\,\bd v\bd \sigma_x\bd t < \infty\quad \text{for any}\quad\! T>0\,.
\end{equation}

In summary, in this paper, we consider the Boltzmann equation \eqref{BE}, with initial condition \eqref{IC-1}-\eqref{IC-bounds}, and boundary condition \eqref{boundary-conditions-incoming}-\eqref{g-bound}.     For the non-cutoff kernel, we work in the class of Alexandre-Villani used \cite{villani2002noncutoff}, i.e. the cross-section satisfies the assumptions from \eqref{AV-first} to \eqref{AV-last}. Our main results are: under these assumptions on the initial-boundary datum and cross-sections, the Boltzmann equation admits a global in time renormalized solution. Furthermore, this solution admits some conservation laws (or inequalities) of mass, momentum, energy and entropy.

\section{Statements of main results}

In this section, we state our main results. The first difficulty we encounter is the definition of renormalized solutions. Besides the renormalization process which is the same as the interior parts for the non-cutoff, in the bounded domain with boundary, the meaning of the ``boundary value" is a nontrivial issue since the solutions lie in the functional space $L^1\cap L\log L$ the element of which can not define the trace in an usual way.  Moreover, for the non-cutoff kernels, the formulation of renormalized solutions needs a defect measure (see \cite{villani2002noncutoff}) which makes the definition of the trace even harder.

The obtained solution in this work just makes sense in the distribution sense, namely in the dual space of smooth test function. There are mainly two kinds of test function space. One is the function space $\mathcal{D}((0,T)\times\mathcal{O})$ which is made up of   smooth function $\phi$ with compact support satisfying
\[ \phi(0, x, v) = \phi(T, x, v) = 0, \qquad \text{for all}~~ (x,v)\in \mathcal{O},  \]
\[ \phi(t, x, v)|_{\partial\Omega} = 0, \qquad \text{for all}~~ (t, v)\in (0,T)\times \mathbb{R}^3,  \]
and there exists $R>0$ such that
\[ \mathrm{Supp}\phi(t,x) \subset \mathrm{B}_{R}(v), \qquad \text{for all}~~ (t,x)\in (0,T)\times\Omega,  \]
where $\mathrm{B}_{R} = \{v | |v| \le R\}$.

The other is function space $\mathcal{D}([0,T]\times\bar{\Omega}\times\mathbb{R}^3)$ which is made up of smooth functions $\phi$ satisfying that
\[ \mathrm{Supp}\phi(t,x) \subset \mathrm{B}_{R}(v), \qquad \text{for all}~~ (t,x)\in [0,T]\times\bar{\Omega}.  \]

In the following,  we will specify the definition of trace for the solution to transport equation while the solution just belongs to $L^1$ space. If the solution to transport equation are smooth up to boundary, then the trace defined below concides with the one in usual sense.
\begin{lemma}[Green Formula{\cite{mischler2010asens}}]
	\label{green-formula-Lp}
	Let $p\in[1, +\infty)$,  $ g \in L^\infty((0,T), L^p_{loc}({\mathcal{O}})$ and $ h \in L^1 ((0,T), L^p_{loc}({\mathcal{O}})$. Assume that $g$ and $h$ satisfies equation
	\begin{align*}
	\partial_t g + v \cdot \nabla_x g = h,
	\end{align*}
	in distribution sense. Then there exists  $\gamma g$ well defined on $(0,T) \times \Sigma$  which satisfies
	\[  \gamma  g \in L^1_{loc} \big([0,T]\times\Sigma, (n(x)\cdot v)^2 dv d\sigma_x dt\big), \]
	and the following Green Formula
	\begin{align*}
	&\int_{0}^{T} \int_{\mathcal{O}} \big( \beta(g)(\partial_t \phi  + v \cdot \nabla_x \phi) + h \beta'(g)\phi\big) \mathrm{d}x \mathrm{d}v \mathrm{d}t \\
	& = \big[ \int_{\mathcal{O}} \beta(g)(\tau, \cdot) \mathrm{d}x \mathrm{d}v \big]\vert_{0}^{T} + \int_{0}^{T} \int\int_{\Sigma} \beta(\gamma g) d\mu d\sigma_x  d\tau,
	\end{align*}
	for  $\beta(.) \in W^{1,\infty}_{loc}(\mathbb{R}^+)$ with $\sup_{x\ge 0}|\beta'(x)| < \infty$,   and all the test function $\phi \in \mathcal{D}_0({[0,T]\times\bar{\Omega}\times\mathbb{R}^3})$, the space of functions $\phi \in \mathcal{D}({[0,T]\times\bar{\Omega}\times\mathbb{R}^3})$ with $\phi\vert_{(0,T)\times\Sigma_0} =0$.
\end{lemma}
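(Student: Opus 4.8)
\emph{Proof proposal.} The plan is to construct the trace $\gamma g$ from the one-dimensional structure of the free transport operator along characteristics, and then to obtain the Green formula by a regularization argument in the $(t,x)$ variables. Freezing $v$, the operator $\partial_t+v\cdot\nabla_x$ is a constant-coefficient directional derivative, so the starting observation is that for almost every line $\tau\mapsto(t+\tau,x+\tau v)$ meeting $\mathcal O_T$, the restriction of $g$ lies in $W^{1,1}_{loc}$ in $\tau$ with $\tfrac{\mathrm d}{\mathrm d\tau}g=h$ along the line; hence $g$ has an absolutely continuous representative on a.e.\ such segment and, in particular, admits one-sided limits at the two endpoints where the segment exits $\mathcal O_T$ through $(0,T)\times\Sigma_\pm$ (or through $\{t=0\}$, $\{t=T\}$). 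Taking these limits defines a candidate $\gamma g$ pointwise almost everywhere on $(0,T)\times\Sigma$, which coincides with the classical trace when $g$ is smooth up to the boundary.

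To make this rigorous and to quantify the trace, I would first straighten $\partial\Omega$ locally using the $W^{2,\infty}$ field $\mathrm n$, and then mollify $g$ in $(t,x)$ compatibly with the transport direction: setting $g_\delta=g\ast_{t,x}\rho_\delta$, one obtains functions smooth in $(t,x)$ that solve $\partial_t g_\delta+v\cdot\nabla_x g_\delta=h_\delta$ with $h_\delta\to h$ in $L^1_{loc}(\mathcal O_T)$, and for which the renormalized chain rule $\partial_t\beta(g_\delta)+v\cdot\nabla_x\beta(g_\delta)=\beta'(g_\delta)h_\delta$ holds classically. For $g_\delta$ the Green formula is nothing but integration by parts along characteristics, with a classical trace $\gamma g_\delta$. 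The task then reduces to showing that $\gamma g_\delta$ is Cauchy in $L^1_{loc}\big([0,T]\times\Sigma;(\mathrm n(x)\cdot v)^2\,\mathrm dv\,\mathrm d\sigma_x\,\mathrm dt\big)$ and that its limit is the characteristic trace above. In the interior this is the DiPerna--Lions commutator lemma combined with the uniform bounds $g\in L^\infty_t L^p_{loc}$, $h\in L^1_t L^p_{loc}$; near $\partial\Omega$ one needs a boundary-layer estimate.

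That boundary-layer estimate is the quantitative heart of the proof. In a thin collar $\{0<r<\eta\}$ of $\partial\Omega$, parametrized by $(y,r)\in\partial\Omega\times(0,\eta)$ with $r$ the normal distance, one integrates the representation $g(t,x)=\gamma g\big(t-\sigma,x-\sigma v\big)+\int_0^\sigma h(t-s,x-sv)\,\mathrm ds$ along the characteristic that hits $\partial\Omega$ after travelling normal distance $r$, thereby bounding $\int_0^T\!\iint_\Sigma|\gamma g|\,w(x,v)\,\mathrm dv\,\mathrm d\sigma_x\,\mathrm dt$ (and the corresponding differences $\gamma g_\delta-\gamma g_{\delta'}$) by $L^p_{loc}$ norms of $g$ and $h$ over the collar. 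The Jacobian of this characteristic change of variables degenerates like $|\mathrm n(y)\cdot v|$ at the grazing set $\Sigma_0=\{(x,v)\in\Sigma:\mathrm n(x)\cdot v=0\}$, and characteristics with $|\mathrm n(y)\cdot v|\lesssim\eta$ never cross the collar; balancing these two effects forces the admissible weight to be $w=(\mathrm n(x)\cdot v)^2$ rather than the cleaner $w=|\mathrm n(x)\cdot v|$ that is available when $g$ is globally integrable in $(x,v)$. This simultaneously yields the claimed regularity $\gamma g\in L^1_{loc}$ with weight $(\mathrm n\cdot v)^2$ and the stability of the trace under mollification; it is essentially the content of Mischler's trace theory \cite{mischler2000vlasovtrace,mischler2000-cmp,mischler2010asens}.

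Finally, for each $\delta$ I would write the Green formula for $g_\delta$, the renormalization $\beta$ (with $\sup|\beta'|<\infty$, hence $\beta$ globally Lipschitz and $\beta(g_\delta)\in L^1_{loc}$), and a test function $\phi\in\mathcal D_0([0,T]\times\bar\Omega\times\mathbb R^3)$: multiplying $\partial_t g_\delta+v\cdot\nabla_x g_\delta=h_\delta$ by $\beta'(g_\delta)\phi$, using the chain rule and integrating by parts over $\mathcal O_T$, produces the time slices $\big[\int_{\mathcal O}\beta(g_\delta)\phi(\tau,\cdot)\,\mathrm dx\,\mathrm dv\big]_0^T$ and the boundary integral $\int_0^T\!\iint_\Sigma\beta(\gamma g_\delta)\,\phi\,(\mathrm n(x)\cdot v)\,\mathrm dv\,\mathrm d\sigma_x\,\mathrm d\tau$, balanced against the interior term $\int_{\mathcal O_T}\big(\beta(g_\delta)(\partial_t\phi+v\cdot\nabla_x\phi)+\beta'(g_\delta)h_\delta\phi\big)$. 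Letting $\delta\to0$: $\beta(g_\delta)\to\beta(g)$ and $\beta'(g_\delta)h_\delta\to\beta'(g)h$ in $L^1_{loc}(\mathcal O_T)$ by dominated convergence, while $\beta(\gamma g_\delta)\to\beta(\gamma g)$ tested against $\phi\,(\mathrm n\cdot v)$ because $\phi$ vanishes on $\Sigma_0$ and has compact $v$-support, so $|\phi\,(\mathrm n\cdot v)|\lesssim(\mathrm n\cdot v)^2$ on $\mathrm{Supp}\,\phi$, which is precisely the topology in which the traces converge; this gives Lemma~\ref{green-formula-Lp}. The step I expect to be the main obstacle is the boundary-layer/commutator argument: in a bounded domain mollification unavoidably mixes values of $g$ across $\partial\Omega$, so one must mollify only in directions tangent to the straightened boundary (or extend $g$ across $\partial\Omega$ in a transport-compatible way) and then verify that the regularized traces converge to the intrinsic characteristic trace uniformly up to the grazing set --- this is exactly the delicate part of \cite{mischler2000vlasovtrace,mischler2000-cmp} that Lemma~\ref{green-formula-Lp} imports.
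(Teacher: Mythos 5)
The paper does not prove this lemma; it imports it verbatim as a cited result from Mischler \cite{mischler2010asens} (building on \cite{mischler2000vlasovtrace,mischler2000-cmp}), so the only available comparison is with the cited trace theory itself. Your sketch is a faithful reconstruction of exactly that argument: traces defined by absolute continuity of $g$ along a.e.\ characteristic, a transport-compatible mollification near the straightened boundary, a collar estimate whose degenerate Jacobian at the grazing set $\Sigma_0$ is what forces the weight $(\mathrm n(x)\cdot v)^2$, and finally passage to the limit in the classical Green identity using that $\phi\vert_{\Sigma_0}=0$ gives $|\phi\,(\mathrm n\cdot v)|\lesssim(\mathrm n\cdot v)^2$ on $\mathrm{Supp}\,\phi$. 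This matches the strategy and the technical pressure points of the reference the lemma quotes, so there is no substantive divergence.
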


Now we introduce the definition of solutions. The renormalized solutions obtained in \cite{villani2002noncutoff} satisfy the following inequality
\[ \partial_t \beta(f) + v \cdot \nabla_x \beta(f) \ge \beta'(f)\Q(f,f) \]
in the sense of distribution for all concave function $\beta$ with  at most logarithm increase rate. Furthermore, we don't know whether $\beta'(f)\Q(f,f)$ belongs to $L^1$ space. So the definition of solution are different with these  on cut-off kernel in \cite{mischler2000-cmp,mischler2010asens}.
\begin{definition}
	\label{def-renormalized}
	Assume that  the cross section $b(z,\omega)$ in \eqref{collision-original} satisfies the assumptions listed from \eqref{AV-first} to \eqref{AV-last} and  $\beta \in C^2(\mathbb{R}^+, \mathbb{R}^+)$ satisfies
\begin{equation}\label{beta-noncutoff}
	 \beta(0) = 0\,,\quad 0 < \beta'(f) < \frac{C}{ 1 +f}\,,\quad\text{and}\quad\! \beta''(f) < 0.
\end{equation}
A nonnegative function
	\[ f \in C\big(\mathbb{R}^+, \mathcal{D}'(\mathcal{O})\big) \cap L^\infty(\mathbb{R}^+; L^1\big((1 + |v|^2 )dxdv )\big)\]
	is called a {\it renormalized solution} to the  Boltzmann equation \eqref{BE}, with initial condition \eqref{IC-1}-\eqref{IC-bounds}, and boundary condition \eqref{boundary-conditions-incoming}-\eqref{g-bound},  if for every renormalization function $\beta$ satisfying \eqref{beta-noncutoff} and every time $T > 0$,
	there is a nonnegative finite defect measure   on $(0, T) \times\mathcal{O}$ such that the following equation
	\begin{align}
	\label{est-definition-renormalized-solution}
	\partial_t \beta(f) + v \cdot \nabla_x \beta(f) \ge \beta'(f)\Q(f,f),
	\end{align}
	holds in the following sense : there exist a trace defined on $(0,T) \times\Sigma_+$ denoted by $\gamma_+ f \in L^1((0,T)\times\Sigma_+)$, and for any non-negative test function $\psi \in \mathcal{D}([0,T]\times \bar{\Omega}\times\mathbb{R}^3)$ with $\psi|_{(0,T)\times\Sigma_0 } = 0$,
	\begin{align}
	\label{est-definition-trace-of-renormalized-defect-measure}
	\begin{split}
	&\int_{0}^{T} \int_{\mathcal{O}} \big( \beta(f)(\partial_t \psi  + v \cdot \nabla_x \psi) +  Q(f, f)\beta'(f) \psi\big) \mathrm{d}x \mathrm{d}v \mathrm{d}t \\
	& \le  \int_{\mathcal{O}} \beta(f)(T, \cdot) \psi  \mathrm{d}x \mathrm{d}v  - \int_{\mathcal{O}} \beta(f)(0, \cdot) \psi  \mathrm{d}x \mathrm{d}v \\
	& + \int_{0}^{T} \int_{\Sigma_+} \beta(\gamma_+ f) \psi  |\mathrm{n}(x) \cdot\cdot v| \bd v \mathrm{d}\sigma_x \mathrm{d}\tau - \int_{0}^{T} \int_{\Sigma_-} \beta(g) \psi  |\mathrm{n}(x) \cdot\cdot v| \bd v  \mathrm{d}\sigma_x \mathrm{d}\tau.
	\end{split}
	\end{align}
	
	Furthermore, $f$, $\gamma_+ f$ and $g$ satisfies the global mass conservation law
	\begin{align*}
	&\int_{\mathcal{O}} f(t) \mathrm{d}v \mathrm{d}x + \int_{0}^{t} \int_{\Sigma_+} \gamma_+ f(s) |\mathrm{n}(x)\cdot v| \mathrm{d}v \mathrm{d}\sigma_x \mathrm{d}s \nonumber \\
	&  = \int_{\mathcal{O}} f_0 \mathrm{d}v \mathrm{d}x + \int_{0}^{t} \int_{\Sigma_-}  g(s) |\mathrm{n}(x)\cdot v| \mathrm{d}v \mathrm{d}\sigma_x \mathrm{d}s, \qquad t \le T.
	\end{align*}
	\end{definition}

\noindent
{\bf Remark:} From \cite{villani2002noncutoff}, since there exists a  defect measure,    the   \eqref{est-definition-renormalized-solution}  which  holds in the sense of distribution is only an inequality.  The only useful information on this defect measure at our disposal is that it is a positive measure.  The   inequality \eqref{est-definition-renormalized-solution} can be multiplied by  positive test function $\psi$ belonging to $\mathcal{D}([0,T]\times\bar{\Omega}\times\mathbb{R}^3)$.  But there are too many candidates $\gamma_+ f$ satisfying \eqref{est-definition-trace-of-renormalized-defect-measure}. It is very natural to assume that at any time $t>0$,  the sum of the mass in the interior domain and the mass on the out going set should be equal to the sum of  initial mass and the mass on the incoming set. Motivated by \cite{villani2002noncutoff}, we introduce the global mass conservation law to define the trace too. Besides, if the solution $f$ are smooth and the defect measures in the interior domain vanishes, by the  the trace of $f$ on $(0,T)\times\Sigma_+$ in the usual sense is equal to $\gamma_+ f$. This is why we denote it by $\gamma_+ f$.

Before stating our main results, we introduce some notations. Let $\mathcal{M}$ be the global Maxwellian, namely, $(2\pi)^{-3} \exp(-\frac{|v|^2}{2}) $. The relative entropy denoted by $ H(f|\mathcal{M})$ is defined as
\[ H(f^n|\mathcal{M}) = \int_\mathcal{O} h(f_{\mathcal{M} }^n) \mathcal{M}\bd v \bd x,   \]
where
\[  h(z) = z\log z - z + 1, z \ge 0, ~f_{\mathcal{M} }^n = \frac{f^n}{\mathcal{M}}.\]
We also denote by $\mathcal{D}(f)$ the {\it H-dissipation}
\[\small 4\mathcal{D}(f) = \int_\Omega \int_{\mathbb{R}^3 \times\mathbb{R}^3}\bd v \bd v_* \bd x \int_{\mathcal{S}^2} \bd \omega B(v-v_*, \omega)(f'f_*' - ff_*)\log \frac{f'f_*'}{ff_*}.\]

\begin{theorem}
	\label{main-result}
  Under the assumption on the cross section $B$   from \eqref{AV-first} to \eqref{AV-last}, if  the initial datum satisfies \eqref{IC-bounds} and  the incoming boundary condition  satisfies \eqref{g-bound},
then  the initial-boundary problem to Boltzmann equation  \eqref{BE} admits a renormalized solution $f$. Furthermore, $f$ has the following properties:
\begin{itemize}
\item Regularity of Trace:
\[ \gamma_+ f \in L^1\big((0,T)\times\Sigma_+; (1+ |v|^2)|\mathrm{n}(x)\cdot v| \mathrm{d}v \mathrm{d}\sigma_x ds\big), ~ \text{for ~ all~} T>0. \]
and
\begin{align*}
\int_{0}^{T} \int_{\Sigma_+}\gamma_+ f |\log \gamma_+ f| \mathrm{d} \mu \mathrm{d}\sigma_x \mathrm{d}s < +\infty,~\text{for ~ all~} T>0.
\end{align*}

\item Local   conservation law of mass:
\begin{align*}
\partial_t \int_{\mathbb{R}^3} f \mathrm{d} v + \nabla \cdot \int_{\mathbb{R}^3} f v \mathrm{d} v = 0,~ \text{in}~~ \mathcal{D}'((0,T)\times\Omega).
\end{align*}

\item Local  conservation law of momentum: There is a distribution-value matrix $W$ belonging to $\mathcal{D}'((0,T)\times\Omega)$   such that
\begin{align*}
\partial_t \int_{\mathbb{R}^3} v f(t)\mathrm{d}v + \nabla \cdot   \int_{\mathbb{R}^3} v\otimes v f  \mathrm{d}v  + \nabla \cdot W  =0 ,~~\text{in},~~~ \mathcal{D}'((0,T)\times\Omega).
\end{align*}
\item Global momentum conservation law:
\begin{align}
\begin{split}
&\int_{\mathcal{O}} f(t) v \mathrm{d}v \mathrm{d}x + \int_{0}^{t} \int_{\Sigma_+} v \gamma_+ f(s) |\mathrm{n}(x)\cdot v| \mathrm{d}v \mathrm{d}\sigma_x \mathrm{d}s  \\
&  = \int_{\mathcal{O}} f_0  v \mathrm{d}v \mathrm{d}x + \int_{0}^{t} \int_{\Sigma_-} v  g(s) |\mathrm{n}(x)\cdot v| \mathrm{d}v \mathrm{d}\sigma_x \mathrm{d}s, \qquad t \le T.
\end{split}
\end{align}

\item Global energy  inequality:
\begin{align}
\begin{split}
&\int_{\mathcal{O}} f(t) |v|^2  \mathrm{d}v \mathrm{d}x + \int_{0}^{t} \int_{\Sigma_+} |v|^2 \gamma_+ f(s) |\mathrm{n}(x)\cdot v| \mathrm{d}v \mathrm{d}\sigma_x \mathrm{d}s  \\
&  \le \int_{\mathcal{O}} f_0 |v|^2 \mathrm{d}v \mathrm{d}x + \int_{0}^{t} \int_{\Sigma_-}  g(s) |v|^2 |\mathrm{n}(x)\cdot v| \mathrm{d}v \mathrm{d}\sigma_x \mathrm{d}s, \qquad t \le T.
\end{split}
\end{align}	

\item Global entropy inequality:
\begin{align}
\label{est-theorem-main-entropy}
\begin{split}
& H(f|\mathcal{M})(t) + \int_{0}^{t}\int_{\Sigma_+}h(\gamma_+ f_{\mathcal{M} } ) |\mathrm{n}(x)\cdot v| \mathrm{d}v \mathrm{d}\sigma_x\mathrm{d}s +  \int_{0}^t \mathcal{D}(f)(s) \mathrm{d}x \mathrm{d}v \mathrm{d}s \\
&\le  H(f_0|\mathcal{M})  + \int_{0}^{t}\int_{\Sigma_-}h(g|{\mathcal{M} } ) |\mathrm{n}(x)\cdot v| \mathrm{d}v \mathrm{d}\sigma_x\mathrm{d}s,\qquad t \le T.
\end{split}
\end{align}
\end{itemize}
\end{theorem}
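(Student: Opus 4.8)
The plan is to obtain the renormalized solution of Theorem~\ref{main-result} by a double approximation procedure followed by a stability/compactness argument, in the spirit of DiPerna--Lions~\cite{lions1989cauchy}, Alexandre--Villani~\cite{villani2002noncutoff} and Mischler~\cite{mischler2010asens}; the new point is to carry the incoming trace through the limit simultaneously with the non-cutoff defect measure.

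\textbf{Step 1 (Approximate problems).} I would first regularize the cross-section, writing $B=\lim_n B^n$ where $B^n$ is a sequence of bounded, angularly cut-off kernels increasing to $B$ (truncating both the $|z|^{-3}$ kinetic singularity and the angular singularity $b_0$), each satisfying the DiPerna--Lions conditions \eqref{Grad-cutoff-1}--\eqref{Grad-cutoff-2} uniformly on compact sets. I would also mollify $f_0$ and $g$ into $f_0^n$, $g^n$ keeping the bounds \eqref{IC-bounds}, \eqref{g-bound} uniform in $n$, with $f_0^n$ bounded away from $0$ and $\infty$ on compacts. For the resulting cutoff kernel with incoming data \eqref{boundary-conditions-incoming} I invoke the cutoff incoming-data theory (a DiPerna--Lions/Mischler-type construction adapted to the incoming condition, via the trace results of \cite{mischler2000-cmp, mischler2010asens}) to produce renormalized solutions $f^n$ with a well-defined trace $\gamma f^n$ on $(0,T)\times\Sigma$ satisfying the exact renormalized identity of Lemma~\ref{green-formula-Lp} with $\Q^n$, the local conservation of mass, and the entropy identity with outgoing/incoming boundary fluxes.

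\textbf{Step 2 (Uniform bounds and compactness).} From the entropy identity at the cutoff level together with \eqref{IC-bounds} and \eqref{g-bound}, I get, uniformly in $n$: $\sup_t H(f^n|\mathcal{M})(t)<\infty$; $\sup_t \iint f^n(1+|v|^2)\,\bd v\bd x<\infty$; boundedness of the boundary free-energy flux $\int_0^T\!\!\int_{\Sigma_+} h(\gamma_+ f^n_{\mathcal{M}})|\mathrm{n}\cdot v|\,\bd v\bd\sigma_x\bd s$ and of the boundary mass and energy fluxes; and boundedness of the $H$-dissipation $\int_0^T\mathcal{D}(f^n)\,\bd s$. These yield Dunford--Pettis equi-integrability of $\{f^n\}$ on $\mathcal{O}_T$ and, crucially, of $\{\gamma_+ f^n\}$ on $(0,T)\times\Sigma_+$ against $|\mathrm{n}\cdot v|\,\bd v\bd\sigma_x\bd s$ — the boundary free-energy flux bound preventing both concentration at the grazing set $\{v\cdot\mathrm{n}=0\}$ and escape of mass to $|v|=\infty$. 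Renormalizing the transport equation and applying velocity averaging to $\beta(f^n)$ (its source $\beta'(f^n)\Q^n(f^n,f^n)$ being $L^1$-controlled after the usual gain/loss splitting of \cite{lions1989cauchy}) I extract $f^n\to f$ strongly in $C([0,T];L^1_{\mathrm{loc}}(\mathcal{O}))$ and a.e., and $\gamma_+ f^n\rightharpoonup \gamma_+ f$ weakly in $L^1((0,T)\times\Sigma_+;|\mathrm{n}\cdot v|\,\bd v\bd\sigma_x\bd s)$, with $\gamma_+ f$ inheriting the trace regularity stated in the theorem. All uniform bounds pass to $f$ by weak lower semicontinuity, so the global momentum identity, the global energy inequality and the entropy inequality \eqref{est-theorem-main-entropy} follow (equalities becoming inequalities because $H$, $\mathcal{D}$ and the convex boundary fluxes are weakly lower semicontinuous), with $g^n\to g$ handling the incoming fluxes.

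\textbf{Step 3 (Passing to the limit in the collision term; the defect measure).} Here I follow~\cite{villani2002noncutoff} essentially verbatim in the interior. For $\beta$ as in \eqref{beta-noncutoff}, I split $\beta'(f^n)\Q^n(f^n,f^n)$ into a ``cutoff-like'' part, treated by a grazing-collision Taylor-expansion argument around the singularity using $\mu_0<\infty$ and the local integrability of $M_1$ and $|z|M_1'$ from \eqref{AV-first}, and a genuinely singular part whose unfavourable sign is absorbed into a nonnegative finite defect measure; controlling the singular part uses that $\sqrt{\mathcal{D}(f^n)}$ dominates a fractional-derivative norm of $\sqrt{f^n}$ in $v$, which combined with velocity averaging gives the compactness needed to identify limits. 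This produces, in $\mathcal{D}'((0,T)\times\Omega\times\mathbb{R}^3)$,
\[
\partial_t\beta(f)+v\cdot\nabla_x\beta(f)\ \ge\ \beta'(f)\Q(f,f),
\]
namely \eqref{est-definition-renormalized-solution}, together with the local conservation of mass and the local momentum balance with the distribution-valued matrix $W$ arising from the second-moment defect.

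\textbf{Step 4 (Boundary terms and identification of the trace).} Testing the cutoff renormalized identity against $\psi\in\mathcal{D}([0,T]\times\bar\Omega\times\mathbb{R}^3)$, $\psi\ge0$, $\psi|_{(0,T)\times\Sigma_0}=0$: the interior terms converge by Steps~2--3 (the collision term giving the inequality of \eqref{est-definition-trace-of-renormalized-defect-measure}); the incoming term $\int\!\!\int_{\Sigma_-}\beta(g^n)\psi|\mathrm{n}\cdot v|$ converges by $g^n\to g$ and \eqref{g-bound}; and the outgoing term $\int\!\!\int_{\Sigma_+}\beta(\gamma_+ f^n)\psi|\mathrm{n}\cdot v|$ — with $\beta$ bounded, concave and $\{\gamma_+ f^n\}$ equi-integrable against $|\mathrm{n}\cdot v|\,\bd v\bd\sigma_x\bd s$ — converges along a subsequence to a quantity $\le \int\!\!\int_{\Sigma_+}\beta(\gamma_+ f)\psi|\mathrm{n}\cdot v|$ by Jensen's inequality and weak lower semicontinuity, which is exactly the sign needed to keep \eqref{est-definition-trace-of-renormalized-defect-measure} an inequality. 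Integrating the cutoff local mass conservation in $x$ and passing to the limit (equi-integrability excluding boundary mass loss) yields the global mass conservation law; this in particular pins down $\gamma_+ f$ as the weak limit of the cutoff traces and selects the physically correct one, completing Definition~\ref{def-renormalized}. Testing instead with $1$, $v$, $|v|^2$, $h(f_{\mathcal{M}})$ and using lower semicontinuity gives the remaining global identities and inequalities, and the trace integrability $\gamma_+ f\in L^1((1+|v|^2)|\mathrm{n}\cdot v|\,\bd v\bd\sigma_x\bd s)$ with finite boundary $|\log|$-entropy.

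\textbf{Main obstacle.} The principal difficulty is Step~4 entangled with the defect measure: unlike the cutoff case, where Lemma~\ref{green-formula-Lp} provides a clean trace renormalization and an exact identity, the non-cutoff formulation yields only an inequality with an uncontrolled (merely nonnegative) defect measure, so \eqref{est-definition-trace-of-renormalized-defect-measure} alone does not determine $\gamma_+ f$. Establishing the equi-integrability of the outgoing traces up to the grazing set $\Sigma_0$ and at $|v|\to\infty$ purely from the boundary free-energy flux bound, and then showing that the limiting trace simultaneously satisfies the renormalized inequality and the exact global mass conservation law, is the delicate part; this is precisely where the argument departs from~\cite{villani2002noncutoff} and calls on the kinetic-trace machinery of~\cite{mischler2000-cmp, mischler2010asens}.
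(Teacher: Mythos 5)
Your proposal follows essentially the same route as the paper: approximate by cutoff kernels with regularized data, extract weak $L^1$ limits of $f^n$ and $\gamma_+ f^n$ from entropy/moment bounds, invoke the Alexandre--Villani stability theorem for the interior defect-measure inequality, and use the concavity of $\beta$ (upper semicontinuity under weak convergence) to pass the outgoing-boundary term to the limit with the correct sign, pinning down $\gamma_+ f$ by the global mass balance. The only point you understate is that controlling $\mathcal{R}_3$ on a bounded domain is \emph{not} verbatim AV: because $\int_{\Omega\times\mathbb{R}^3} v\cdot\nabla_x\beta(f)\,\psi(v)\,\mathrm{d}x\,\mathrm{d}v\neq 0$, the paper's Lemma~\ref{lemma-decompose-r-3} must feed the boundary flux bound \eqref{conservation-boundary} into the $L^1$ estimate of $\mathcal{R}_3$, a new ingredient absent from the whole-space argument.
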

\begin{remark}
As for the local mass conservation law, similar to Lemma \ref{green-formula-Lp},  we can use the Green formula to define the trace of $\int_{\mathbb{R}^3} v f\mathrm{d} v$ on $\partial\Omega$. {  Denote it by $\gamma_x(\int_{\mathbb{R}^3} v f\mathrm{d} v)$,
$$ \mathrm{n}(x) \gamma_x(\int_{\mathbb{R}^3} v f\mathrm{d} v) = \int_{\Sigma_+^x} \gamma_+ f  | \mathbf{n}(x) \cdot v  |\mathrm{d} v  - \int_{\Sigma_-^x} \gamma_- f |\mathrm{n}(x) \cdot v| \mathrm{d} v,$$ and $$\mathrm{n}(x)\gamma_x(\int_{\mathbb{R}^3} f v \mathrm{d} v)  \in L^1\big((0,T)\times\partial\Omega; \mathrm{d} \sigma_x \mathrm{d} s \big).$$}
\end{remark}

\begin{remark}
 This result also works for   unbounded domain case. While   on the unbounded domain, the weight $|x|^2$ are necessary.  Besides, all these result are still correct in $\mathbb{R}^n$, $n\ge 3$.
\end{remark}

Compared  to  Boltzmann equation with cutoff kernel, from \cite{villani2002noncutoff}, the solution $f$ only satisfies the following inequality
\[ \partial_t \beta(f) + v \cdot \nabla_x \beta(f) \ge \beta'(f)\Q(f,f) \]
in distributional sense. Besides, $\beta(f)\Q(f,f)$ doesn't belongs to $L^1$ space. It is just a distribution belonging to $\mathcal{D}'((0,T)\times\mathcal{O})$. So the trace theory in \cite{mischler2010asens,mischler2000vlasovtrace} and references therein on cutoff case completely doesn't work here. It needs some new idea. Since the trace on $\Sigma_-$ is fixed, the main task is to find some $\gamma_+f$ satisfying \eqref{est-definition-trace-of-renormalized-defect-measure}, conservation law of mass and Theorem \ref{main-result}. From our former work on Boltzmann equation with cutoff kernel, we can construct a sequence of approximate solutions whose traces are weakly compact in $L^1$ space. Noticing that the renormalized function $\beta$ are convex and the test function are positive, then by the upper semi-continuity of convex function, we complete the proof.

\section{Estimates of Approximate system}
\label{sec-approximate}
In this section, we will construct a sequence of approximate solutions to Boltzmann equation with modified collision kernel $\Q^n$, namely
\begin{equation}
\label{collision-original-cut}
\begin{split}
\Q^n(f,f)=\big(\frac{1}{\footnotesize 1 + \frac{1}{n}\int f^n dv}\big)\int_{\mathbb{R}^3\times\mathbb{S}^2}B_n |[f(v')f({v_*}')-f(v)f(v_*)]\bd v_*\bd \omega
\end{split}
\end{equation}
with \begin{align}
\label{cross-section}
B_n(v-v_*,\omega) = B(v-v_*, \omega) \cdot I_{ \frac{1}{n} \le  \theta \le \pi}  I_{ \frac{1}{n} \le  |v - v_*|  \le n^2}.
\end{align}
For every $n \in \mathbb{N}^+$, the initial data of approximate system  are chosen as the one in \cite{lions1989cauchy}, namely \begin{align}
\label{app-f-0}
f_0^n = \tilde{f}_0^n + \frac{1}{n}\exp(-\frac{|x|^2}{2} - \frac{|v|^2}{2}),
\end{align}
where $\tilde{f}_0^n$ is obtained by truncating $f_0$ first and then smoothing it.   In details,   we will solve the following initial-boundary problem
\begin{align}
\label{boltzmann-approximate-lions}
\begin{cases}
\partial_t f^n + v \cdot \nabla_x f^n = \Q^n(f^n, f^n),~ x \in \Omega \subset \mathbb{R}^3,  v\in \mathbb{R}^3, \\
f^n(0,x, v) = f_0^n(x, v),\\
\gamma_-f^n =g,~~\text{on}~~\Sigma_-
\end{cases}
\end{align}
where $g$ satisfies
\begin{align}
\label{app-g}
\int_{0}^{t} \int_{\Sigma_-} g(1 + |v|^2 + |\log g|)  \mathrm{d}\mu \mathrm{d}\sigma_x \mathrm{d}s < C(t) < \infty,~~\text{for all} ~ t >0.
\end{align}
and $f_0^n$ satisfies
\begin{align}
\label{app-f}
\int_{\mathcal{O}} f_0^n(1 + |v|^2 + |\log f_0^n|)  \mathrm{d}\mu \mathrm{d}\sigma_x \mathrm{d}s < C_0 < \infty,~~\text{for all}~ n.
\end{align}

For each fixed $n$, we can use fixed point theorem to solve system \eqref{boltzmann-approximate-lions}.  Th detailed proof of the existence can been found in our former work where we obtain the following theorem about global existence:

\begin{theorem}[global-in-time existence]
	\label{theorem-boltzmann-approximate-lions-global} For any $T>0$, under the assumption \eqref{app-f} and \eqref{app-g},  for every $n$,  system \eqref{boltzmann-approximate-lions} has a unique solution $f^n \in L^\infty([0,T];L^1(\mathcal{O}))$ such that
	\[ \partial_t f^n + v \cdot \nabla_x f^n = \Q^n(f^n, f^n) \]
	hold in the sense of distribution. Further, there exists  a unique trace $\gamma_+ f \in L^1((0,T)\times\Sigma_+;\bd \mu \bd \sigma_x \bd s) $ to \eqref{boltzmann-approximate-lions}  such that
	\begin{align*}
	&\int_{0}^{T} \int_{\mathcal{O}} \big( \beta(f^n)(\partial_t \phi  + v \cdot \nabla_x \phi) + \Q^n(f^n,f^n) \beta'(f^n)\phi\big) \mathrm{d}x \mathrm{d}v \mathrm{d}t \\
	& =   \int_{\mathcal{O}}  \phi(T) \beta(f^n)(T) \mathrm{d}x \mathrm{d}v   -    \int_{\mathcal{O}}  \phi(0) \beta(f^n_0)  \mathrm{d}x \mathrm{d}v   \\
	&  + \int_{0}^{T}  \int_{\Sigma_+} \phi \beta(\gamma_+ f^n) \bd \mu \bd \sigma_x  \bd t - \int_{0}^{T}  \int_{\Sigma_-} \beta( g) \phi  \bd \mu \bd \sigma_x  \bd t,
	\end{align*}
	for all  $\beta'(.) \in L^\infty(\mathbb{R}^+)$   and all the test function $\phi \in \mathcal{D}_0({[0,T]\times\bar{\Omega}\times\mathbb{R}^3})$, the space of functions $\phi \in \mathcal{D}({[0,T]\times\bar{\Omega}\times\mathbb{R}^3})$ with $\phi\vert_{(0,T)\times\Sigma_0} =0$.
	Furthermore, $f^n$ and $\gamma_+ f^n$ satisfy
	\begin{itemize}
		\item global conservation law of mass:
			\begin{align}
		\label{est-theorem-approximate-lions-global-density}
		\begin{split}
	&\int_{\mathcal{O}}   f^n(t) \mathrm{d}x \mathrm{d}v + \int_{0}^{t}\int_{\Sigma_+}  \gamma_+ f^n |\mathrm{n}(x)\cdot v| \mathrm{d}v \mathrm{d}\sigma_x\mathrm{d}s \\
	&=  \int_{\mathcal{O}} f^n_0 \mathrm{d}x \mathrm{d}v  +  \int_{0}^{t}\int_{\Sigma_-} g |\mathrm{n}(x)\cdot v| \mathrm{d}v \mathrm{d}\sigma_x\mathrm{d}s,\qquad t \le T.
	\end{split}
		\end{align}
			\item global conservation law of momentum
			\begin{align}
			\label{est-theorem-approximate-lions-global-mo}
			\begin{split}
			&\int_{\mathcal{O}}  v f^n(t) \mathrm{d}x \mathrm{d}v + \int_{0}^{t}\int_{\Sigma_+} v \gamma_+ f^n |\mathrm{n}(x)\cdot v| \mathrm{d}v \mathrm{d}\sigma_x\mathrm{d}s \\
			& =  \int_{\mathcal{O}} v f^n_0 \mathrm{d}x \mathrm{d}v  +  \int_{0}^{t}\int_{\Sigma_-} v g |\mathrm{n}(x)\cdot v| \mathrm{d}v \mathrm{d}\sigma_x\mathrm{d}s, \qquad t \le T.
			\end{split}
			\end{align}
			\item global conservation las of energy
			\begin{align}
			\label{est-theorem-approximate-lions-global-energy}
			\begin{split}
			&\int_{\mathcal{O}}  | v|^2f^n(t) \mathrm{d}x \mathrm{d}v + \int_{0}^{t}\int_{\Sigma_+} | v|^2\gamma_+ f^n |\mathrm{n}(x)\cdot v| \mathrm{d}v \mathrm{d}\sigma_x\mathrm{d}s \\
			&\le  \int_{\mathcal{O}}  | v|^2f^n_0 \mathrm{d}x \mathrm{d}v  +  \int_{0}^{t}\int_{\Sigma_-}| v|^2g |\mathrm{n}(x)\cdot v| \mathrm{d}v \mathrm{d}\sigma_x\mathrm{d}s, \qquad t \le T.
			\end{split}
			\end{align}
			\item global entropy inequality
			\begin{align}
			&\label{est-theorem-app-lions-entropy}
			 \int_{\mathcal{O}}  f^n\log f^n(t) \mathrm{d}x \mathrm{d}v + \int_{0}^{t}\int_{\Sigma_+}\gamma_+f^n  \log \gamma_+ f^n |\mathrm{n}(x)\cdot v| \mathrm{d}v \mathrm{d}\sigma_x\mathrm{d}s +  \int_{0}^t \mathcal{D}(f^n)(s) \mathrm{d}x \mathrm{d}v \mathrm{d}s  \nonumber \\
			& \le \int_{\mathcal{O}}  f^n_0\log f^n_0 \mathrm{d}x \mathrm{d}v + \int_{0}^{t}\int_{\Sigma_-}g \log g |\mathrm{n}(x)\cdot v| \mathrm{d}v \mathrm{d}\sigma_x\mathrm{d}s,\qquad  t \le T,
			\end{align}
			\item global relative entropy inequality
			\begin{align}
			&    H(f^n|\mathcal{M})(t) + \int_{0}^{t}\int_{\Sigma_+}h(\gamma_+ f_{\mathcal{M} }^n ) |\mathrm{n}(x)\cdot v| \mathrm{d}v \mathrm{d}\sigma_x\mathrm{d}s +  \int_{0}^t \mathcal{D}(f^n)(s) \mathrm{d}x \mathrm{d}v \mathrm{d}s\nonumber\\
			& \label{est-theorem-app-lions-global-relative-entropy} \le  H(f_0^n|\mathcal{M})  + \int_{0}^{t}\int_{\Sigma_-}h(g|{\mathcal{M} } ) |\mathrm{n}(x)\cdot v| \mathrm{d}v \mathrm{d}\sigma_x\mathrm{d}s,\qquad   t  \le T.
			\end{align}
	\end{itemize}

\end{theorem}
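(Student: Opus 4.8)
The plan is to exploit the fact that, for each fixed $n$, the truncated kernel $B_n$ of \eqref{cross-section} is bounded with support compact in both $\theta$ and $|v-v_*|$, and that the normalizing prefactor $(1+\tfrac1n\int f^n\,\bd v)^{-1}$ in \eqref{collision-original-cut} turns the gain term into a map with a uniform $L^1$ bound; hence \eqref{boltzmann-approximate-lions} behaves like a ``soft'' transport problem rather than a genuinely quadratic one. Split $\Q^n(f,f)=\Q^n_+(f,f)-f\,L^n(f)$ with $L^n(f)=(1+\tfrac1n\int f\,\bd v)^{-1}\int_{\mathbb{R}^3\times\mathbb{S}^2}B_n f_*\,\bd v_*\bd\omega$ and write the equation in mild form along the characteristics $s\mapsto(x-(t-s)v,\,v)$. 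Since $\Omega$ is bounded, every backward characteristic either reaches $\{t=0\}$ inside $\bar\Omega$ or leaves $\bar\Omega$ through $\partial\Omega$ after a finite backward exit time $\tau(x,v)$; on those two sets Duhamel's formula determines $f^n$ from, respectively, $f_0^n$ and from $g$ evaluated at the exit point, plus a time integral of $\Q^n_+$ against the absorption factor $\exp(-\int L^n)$. First I would set up a monotone iteration: let $f^{k+1}$ solve the \emph{linear} problem $\partial_t f^{k+1}+\vdot f^{k+1}+f^{k+1}L^n(f^k)=\Q^n_+(f^k,f^k)$ with data $(f_0^n,g)$. Nonnegativity of $\Q^n_+$ and of the linear absorption keeps all iterates $\ge0$; the mass-conservation property of $\Q^n$ (see below) keeps $\sup_{[0,T]}\|f^k(t)\|_{L^1(\mathcal O)}$ bounded uniformly in $k$; and monotonicity plus this bound yields a limit $f^n\in L^\infty([0,T];L^1(\mathcal O))$ solving \eqref{boltzmann-approximate-lions}. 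Uniqueness follows from the standard $L^1$ stability estimate for this regularized equation — $f\mapsto\Q^n(f,f)$ being Lipschitz on bounded subsets of $L^1$ — together with Gronwall's inequality.

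The conservation laws come from testing the equation against $1$, $v$, $|v|^2$ and $\log f^n$ and using the classical symmetry of the collision integral under the involution $(v,v_*,\omega)\mapsto(v',v_*',\omega)$: the velocity integral of $\Q^n(f,f)\varphi$ vanishes for $\varphi\in\{1,v,|v|^2\}$ and, for $\varphi=\log f$, it is $\le0$ and produces the dissipation term $\mathcal D(f^n)\ge0$ of \eqref{est-theorem-app-lions-entropy}. The transport term is converted, via the Green formula of Lemma \ref{green-formula-Lp}, into the interior time-derivative plus the outgoing and incoming fluxes through $\Sigma_\pm$; since $\gamma_- f^n=g$ is prescribed this gives the mass identity \eqref{est-theorem-approximate-lions-global-density} and the momentum identity \eqref{est-theorem-approximate-lions-global-mo} as \emph{equalities}, and the energy \eqref{est-theorem-approximate-lions-global-energy} and entropy \eqref{est-theorem-app-lions-entropy} relations as \emph{inequalities}: the $|v|^2$- and $|\log\cdot|$-weighted outgoing fluxes are not known to be finite a priori, so these two balances must be run along the monotone/Duhamel approximation and only survive the limit as $\le$ by Fatou, the sign of $\mathcal D(f^n)$ being used for the entropy one. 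Finally, \eqref{est-theorem-app-lions-global-relative-entropy} is the algebraic recombination $h(f/\mathcal M)\mathcal M=f\log f+\tfrac{|v|^2}{2}f+(3\log2\pi-1)f+\mathcal M$ of the mass, energy and entropy balances, together with the uniform finiteness of $H(f_0^n|\mathcal M)$, which is built into the choice \eqref{app-f-0} and the bound \eqref{app-f}.

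For the trace, note that $\Q^n(f^n,f^n)\in L^1((0,T)\times\mathcal O)$ — indeed $\|\Q^n_+(f,f)\|_{L^1_v}\le C_n\,n\,\|f\|_{L^1_v}$, and likewise for the loss term — so $f^n$ is an $L^1$ solution of a transport equation with $L^1$ right-hand side and Lemma \ref{green-formula-Lp} applies: it yields a trace $\gamma f^n$ on $(0,T)\times\Sigma$, a priori only in $L^1_{loc}$ against $(\mathbf n\cdot v)^2\,\bd v\,\bd\sigma_x\,\bd t$, together with the renormalized Green identity for every admissible $\beta$. Set $\gamma_\pm f^n=\mathbf 1_{\Sigma_\pm}\gamma f^n$; the incoming part is $g$ by construction, and the nonnegativity of the outgoing flux in \eqref{est-theorem-approximate-lions-global-density} upgrades $\gamma_+ f^n$ to $L^1((0,T)\times\Sigma_+;\,|\mathbf n\cdot v|\,\bd v\,\bd\sigma_x\,\bd s)$. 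Because here $\beta'(f^n)\Q^n(f^n,f^n)\in L^1$, the chain rule is legitimate and the Green identity for $\beta(f^n)$ holds as an equality, which is exactly the renormalized formulation in the statement.

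The step I expect to be the main obstacle is the rigorous treatment of the boundary: reconciling the Duhamel representation along the broken characteristic flow with the abstract Green formula of Lemma \ref{green-formula-Lp} for merely $L^1$ (not smooth) solutions, and, in the entropy balance, handling the boundary term $\gamma_+ f^n\log\gamma_+ f^n$ — since $z\mapsto z\log z$ is not monotone near $0$ one cannot dominate it by the mass flux alone, and its negative part must be absorbed into the (already controlled) mass and $|v|^2$ fluxes on $\Sigma_+$ by a Young-type splitting according to whether $z$ is small or large, weighted by $(1+|v|^2)$. Everything else is a routine adaptation of the DiPerna--Lions scheme to the incoming-data setting.
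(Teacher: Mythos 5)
The paper itself does not prove this theorem: for each fixed $n$ it invokes a fixed-point theorem and refers to a ``former work'' of the authors for the details, so there is no internal proof to compare against. What can be said is that your existence step departs from the announced method and contains a gap. You set up the iteration $\partial_t f^{k+1}+v\cdot\nabla_x f^{k+1}+f^{k+1}L^n(f^k)=\Q^n_+(f^k,f^k)$ and assert that the sequence $(f^k)$ is monotone, but the DiPerna--Lions normalizer $\big(1+\tfrac1n\int f\,\bd v\big)^{-1}$ in \eqref{collision-original-cut} spoils this: it is \emph{decreasing} in $f$, while the kernel integrals $\int B_n f'f_*'\,\bd v_*\bd\omega$ and $\int B_n f_*\,\bd v_*$ are \emph{increasing} in $f$. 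The two effects compete, so $f^{k-1}\le f^k$ does not yield $\Q^n_+(f^k,f^k)\ge\Q^n_+(f^{k-1},f^{k-1})$, nor a one-sided comparison of the absorption coefficients $L^n(f^{k-1})$ and $L^n(f^k)$. Without a proved order relation, the ``monotonicity plus this bound yields a limit'' step collapses. The correct route is precisely the observation you make but do not exploit for existence: the normalizer makes $f\mapsto\Q^n(f,f)$ bounded and Lipschitz on bounded sets of $L^\infty([0,T];L^1(\mathcal O))$ (your estimate $\|\Q^n_+(f,f)\|_{L^1_v}\le C_n\,n\,\|f\|_{L^1_v}$ and its analogue for the loss term), so the Duhamel map along the broken characteristics of the bounded domain is a contraction for short time and extends globally by the uniform mass bound; this also delivers uniqueness at once, without a separate stability argument.

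The remainder of your outline is consistent with the statement and with the tools the paper provides. Since $\Q^n(f^n,f^n)\in L^1((0,T)\times\mathcal O)$ for each fixed $n$, Lemma \ref{green-formula-Lp} does apply and gives the trace and the renormalized Green identity as an \emph{equality}; the symmetry of $B_n$ under $(v,v_*,\omega)\mapsto(v',v_*',\omega)$ produces the mass and momentum identities \eqref{est-theorem-approximate-lions-global-density}--\eqref{est-theorem-approximate-lions-global-mo} and the entropy--dissipation relation; and you correctly flag that the energy \eqref{est-theorem-approximate-lions-global-energy} and entropy \eqref{est-theorem-app-lions-entropy} balances must be obtained by truncating the $|v|^2$ and $\log$ weights, passing to the limit, and invoking Fatou (hence they are one-sided), with the negative part of $\gamma_+f^n\log\gamma_+f^n$ absorbed into the already-controlled $(1+|v|^2)$-weighted outgoing flux. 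The only step that would fail as written is the monotone iteration.
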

\begin{remark}
	From \eqref{est-theorem-approximate-lions-global-energy} and
	we can infer that
	\begin{align}
		\label{est-theorem-app-lions-entropy-abs}
		\begin{split}
	\sup\limits_{0\le s \le t}\int_{\mathcal{O}}  f^n|\log f^n|(s) \mathrm{d}x \mathrm{d}v + \int_{0}^{t}\int_{\Sigma_+}\gamma_+f^n |\gamma_+ \log f^n| |\mathrm{n}(x)\cdot v| \mathrm{d}v \mathrm{d}\sigma_x\mathrm{d}s \le C(T).
	 \end{split}
	\end{align}
\end{remark}

\section{Estimate of Renormalized Formulation for the non-cutoff case}
\label{sec-est-renormalized-formulation}
This section is devoted to using the conservation law to bound the source terms $\beta'(f^n) Q^n(f^n, f^n)$ where the $\beta(\cdot)$ satisfies the Definition \ref{def-renormalized} while the cross section satisfies all the assumption from \eqref{AV-first} to \eqref{AV-last}. To simplify the notations, we drop the superscript for the time being and just prove $\beta'(f) Q(f, f)$ can be controlled by
\begin{align}
\label{conservations}
\sup_{0\le s \le T} \int f(s)(1   + |v|^2 + |\log f|) \mathrm{d}x \mathrm{d}v  \le C(T),~~~~\forall t>0.
\end{align}
Then we will show how to modify three lemmas to the approximate case $\beta'(f^n) Q^n(f^n, f^n)$.
As \cite{villani2002noncutoff}, we split $\beta'(f)\Q(f,f)$ into three parts
\begin{align}
\label{decompse}
\beta'(f)\Q(f,f) = \mathcal{R}_1 + \mathcal{R}_2 + \mathcal{R}_3,
\end{align}
where
\begin{align}
&\label{decompose-r-1} \mathcal{R}_1 = [ f\beta'(f) - \beta(f) ]\int_{\mathcal{R}^3\times S^2}dv_* d\omega B(f_*' - f_*),\\
&\label{decompose-r-2} \mathcal{R}_2 =\int_{\mathcal{R}^3\times S^2}dv_* d\omega B[f_*' \beta(f')- f_*\beta(f)],\\
&\label{decompose-r-3} \mathcal{R}_3 =\int_{\mathcal{R}^3\times S^2}dv_* d\omega Bf_*'\big(\beta(f') - \beta(f) - \beta'(f)(f'-f)\big).
\end{align}
Moreover,
\begin{align}
\label{decompose-r-1-rewrite}
\mathcal{R}_1 = [ f\beta'(f) - \beta(f) ] f*_v \mathcal{S},
\end{align}
where
\[ \mathcal{S}(|z|) = C \int_{0}^{\pi/2} d\theta \sin\theta \big[  \frac{1}{\cos\frac{\theta}{2}} B(\frac{1}{\cos\frac{\theta}{2}},\cos \theta ) - B(|z|, \cos \theta)  \big],   \]
and
\begin{align}
\label{decompose-r-2-rewrite}
\int_{\mathbb{R}^3} \mathcal{R}_2 \phi(v) dv = \int_{\mathbb{R}^3\times\mathbb{R}^3} dv dv_* f(v_*) \beta(f)\mathcal{T}(\phi),
\end{align}
where
\[  \mathcal{T}(\phi) = \int_{S^1} B(v-v_*, \omega)(\phi'-\phi) d\omega = 2\pi\int_{0}^\pi B(|v-v_*|, \cos \theta)(\phi'-\phi) \sin \theta d \theta . \]
From \cite{villani2002noncutoff}, we know that $\mathcal{S}(|v|)$ and $\mathcal{T}(\phi)$ have very good properties even thought there exist angular singularity and velocity singularity for $B$. This owes to the {\it Symmetry-Induced Cancellation effects}.
\begin{proposition}
	\label{prop-s}
	Let $B$ satisfies assumption from \eqref{AV-first} to \eqref{AV-last}, then $\mathcal{S}$ is local integrable and $\mathcal{S}(|z|) = o(|z|^2),~~ |z| \to \infty$ .
\end{proposition}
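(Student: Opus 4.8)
The plan is to exploit the exact cancellation already built into the definition of $\mathcal{S}$. Set $\lambda = \lambda(\theta) := 1/\cos\tfrac{\theta}{2}$, which increases from $1$ to $\sqrt{2}$ as $\theta$ runs over $[0,\pi/2]$; the bracket in $\mathcal{S}$ has the structure $\lambda^{3}B(\lambda|z|,\cos\theta) - B(|z|,\cos\theta)$, the power $3$ being the scaling-exact exponent in dimension $3$. This bracket vanishes identically at $\theta = 0$ (where $\lambda = 1$) and is $O(\theta^{2})$ as $\theta \to 0$, and this $\theta^{2}$ gain is exactly what is needed to beat an angular singularity of order $\sin\theta\,b(\cos\theta) \sim \theta^{-1-s'}$ with $s' < 2$. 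Only the borderline-singularity assumption \eqref{AV-first} and the behavior-at-infinity assumption are used; the angular-singularity assumption \eqref{AV-last} plays no role.

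First I would insert the decomposition from \eqref{AV-first}, $B(z,\omega) = \beta_0(\kappa\cdot\omega)|z|^{-3} + B_1(z,\omega)$, into $\mathcal{S}$. For the homogeneous piece the bracket is $\lambda^{3}\beta_0(\cos\theta)(\lambda|z|)^{-3} - \beta_0(\cos\theta)|z|^{-3} = 0$, so the singular part of $B$ contributes nothing and $\mathcal{S}$ depends only on $B_1$. For $B_1$ I would split the bracket as $(\lambda^{3}-1)B_1(\lambda z,\omega) + \big(B_1(\lambda z,\omega) - B_1(z,\omega)\big)$ and use the very definition of $B_1'$ from \eqref{AV-first}, which gives $B_1(\lambda z,\omega) - B_1(z,\omega) \le (\lambda - 1)|z|\,B_1'(z,\omega)$ and hence also $B_1(\lambda z,\omega) \le B_1(z,\omega) + (\lambda - 1)|z|\,B_1'(z,\omega)$. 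Taylor expansion at $\theta = 0$ gives $\lambda - 1 = \tfrac{\theta^{2}}{8} + O(\theta^{4})$ and $\lambda^{3} - 1 = \tfrac{3\theta^{2}}{8} + O(\theta^{4})$, so the bracket is $\le C\theta^{2}\big(B_1(z,\omega) + |z|B_1'(z,\omega)\big)$ on $[0,\pi/4]$ and $\le C\big(B_1(z,\omega) + |z|B_1'(z,\omega)\big)$ on $[\pi/4,\pi/2]$, in both cases expressed through $B_1$ and $B_1'$ evaluated at $z$ itself.

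To conclude local integrability I would use $\theta^{2} \le C(1-\cos\theta)$ on $[0,\pi/4]$ and $1 \le C(1-\cos\theta)$ on $[\pi/4,\pi/2]$, then recognize $\sin\theta\,\mathrm{d}\theta$ as the surface element on $S^{2}$ about $\kappa$; since $B_1 \le B$, this yields $|\mathcal{S}(|z|)| \le C\big(M_1(|z|) + |z|M_1'(|z|)\big)$, which lies in $L^1_{loc}(\mathbb{R}^3)$ by the borderline-singularity assumption. For the decay at infinity I would rerun the same splitting while carrying along the weight $(1-\kappa\cdot\omega)^{\alpha/2}$ with $\alpha \in [0,2]$ the exponent supplied by the behavior-at-infinity assumption: since $\theta^{2} = \theta^{2-\alpha}\theta^{\alpha} \le C(1-\cos\theta)^{\alpha/2}$ near $\theta = 0$ and $(1-\cos\theta)^{\alpha/2} \gtrsim 1$ away from it, the integrand is controlled by $(1-\kappa\cdot\omega)^{\alpha/2}\big(B_1(z,\omega) + |z|B_1'(z,\omega)\big)$, whose angular average is bounded by $M^{\alpha}(|z|) + |z|M'(|z|) = o(|z|^{2-\alpha}) + o(|z|^{2}) = o(|z|^{2})$ as $|z| \to \infty$.

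The main obstacle, and the only step that is more than bookkeeping, is making the $O(\theta^{2})$ cancellation for the inhomogeneous part $B_1$ quantitative using \emph{only} the one-sided difference-quotient bound encoded in $B_1'$ (and $M'$): one must be careful that the factor $B_1(\lambda z,\omega)$ appearing in the $(\lambda^{3}-1)$ term is re-expressed through $B_1$ and $B_1'$ evaluated at $z$, so that the resulting bounds are exactly the quantities $M_1$, $|z|M_1'$, $M^{\alpha}$, $|z|M'$ that the hypotheses declare locally integrable or $o$-small. The exact vanishing of the $|z|^{-3}$ contribution, by contrast, is a one-line computation.
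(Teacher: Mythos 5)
The paper does not actually prove Proposition \ref{prop-s}; it is quoted from Alexandre--Villani \cite{villani2002noncutoff}, where it is part of the ``Cancellation Lemma'' machinery (the paper says explicitly that the good properties of $\mathcal{S}$ and $\mathcal{T}$ ``owe to the Symmetry-Induced Cancellation effects'' and leaves it at that). Your reconstruction follows essentially the same route as the original Alexandre--Villani argument and captures all the key ideas: the exact vanishing of the $\beta_0(\kappa\cdot\omega)|z|^{-3}$ contribution because $\cos^{-3}(\theta/2)$ is the scaling-exact exponent in dimension $3$; the $O(\theta^{2})$ Taylor gain from $1/\cos(\theta/2)-1\approx\theta^{2}/8$, which is exactly what tames an angular singularity $\sin\theta\,b(\cos\theta)\sim\theta^{-1-s'}$ with $s'<2$; the conversion $\theta^{2}\lesssim 1-\cos\theta$ that produces precisely the quantities $M_1$, $|z|M_1'$, $M^\alpha$, $|z|M'$ appearing in the hypotheses; and the observation that the angular-singularity assumption \eqref{AV-last} is never used. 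That is the correct proof, at the level of strategy.

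Two remarks. First, the formula for $\mathcal{S}$ as printed in the paper is garbled: it reads $\frac{1}{\cos(\theta/2)}B\big(\frac{1}{\cos(\theta/2)},\cos\theta\big)$ where it should read $\frac{1}{\cos^{3}(\theta/2)}B\big(\frac{|z|}{\cos(\theta/2)},\cos\theta\big)$; you have silently used the correct version (i.e.\ $\lambda^{3}B(\lambda|z|,\cos\theta)$ with $\lambda=1/\cos(\theta/2)$), and indeed with the power $1$ the $|z|^{-3}$ part would not cancel and the proposition would be false. Second, as written in \eqref{AV-first} the quantity $B_1'$ is a one-sided supremum, so the inequality $B_1(\lambda z,\omega)-B_1(z,\omega)\le(\lambda-1)|z|B_1'(z,\omega)$ controls the bracket only from above; for local integrability you need a bound on $|\mathcal{S}|$, hence also a matching lower bound, and this does not follow directly from the stated definition (the trivial lower bound $\ge -B_1(z,\omega)$ loses the $\theta^{2}$ gain). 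In \cite{villani2002noncutoff} the difference-quotient hypothesis is effectively two-sided, and that is how it should be read here; this is a matter of interpreting the hypotheses rather than a defect in your argument, but it would be worth stating explicitly that you are using the two-sided bound.
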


\begin{proposition}
	\label{prop-t}
		Let $B$ satisfies assumptions from \eqref{AV-first} to \eqref{AV-last}, then for all $\phi \in W^{2,\infty}(\mathbb{R}^3)$, we have
		\[  |\mathcal{T}(\phi)| \le C \|\phi\|_{W^{2,\infty}} |v-v_*|( 1 + |v -v_*| )M(|v-v_*|). \]
\end{proposition}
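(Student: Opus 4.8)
The plan is to prove the bound via a second-order Taylor expansion of $\phi$ at $v$, using the ``Symmetry-Induced Cancellation'' to absorb the borderline angular singularity of $B$. Write $z=v-v_*$, $\kappa=z/|z|$ and $\cos\theta=\kappa\cdot\omega$; from the parametrization in \eqref{collision-original} one has $v'-v=\tfrac{|z|}{2}(\omega-\kappa)$, so that $|v'-v|=|z|\sin(\theta/2)$ and $|v'-v|^{2}=\tfrac{|z|^{2}}{2}(1-\kappa\cdot\omega)$; in particular $v'\to v$ as $\theta\to0$, which is what makes $\mathcal{T}(\phi)$ meaningful. I would set $M(|z|):=\int_{S^{2}}B(z,\omega)(1-\kappa\cdot\omega)\,\bd\omega$, which by the decomposition \eqref{AV-first} equals $\mu_{0}|z|^{-3}+M_{1}(|z|)$ and hence is finite for a.e.\ $z\ne0$ by the borderline singularity assumption.

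First I would decompose, for $\phi\in W^{2,\infty}(\mathbb{R}^{3})$,
\[\mathcal{T}(\phi)=\int_{S^{2}}B(z,\omega)\big(\phi(v')-\phi(v)-\nabla\phi(v)\cdot(v'-v)\big)\,\bd\omega+\nabla\phi(v)\cdot\int_{S^{2}}B(z,\omega)(v'-v)\,\bd\omega.\]
The first integral is \emph{absolutely} convergent: its integrand is bounded by $\tfrac12\|\nabla^{2}\phi\|_{L^{\infty}}|v'-v|^{2}=\tfrac14\|\nabla^{2}\phi\|_{L^{\infty}}|z|^{2}(1-\kappa\cdot\omega)$, so this piece is at most $\tfrac14\|\nabla^{2}\phi\|_{L^{\infty}}|z|^{2}M(|z|)$ — the quadratic vanishing of $1-\kappa\cdot\omega=2\sin^{2}(\theta/2)$ compensates the non-integrable angular singularity of $B$, finiteness of the weighted integral being exactly the content of $\mu_{0}<\infty$ and $M_{1}\in L^{1}_{loc}$. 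For the linear term I would use that $\omega\mapsto B(z,\omega)=B(|z|,\cos\theta)$ is invariant under rotations about $\kappa$: writing $\omega=\cos\theta\,\kappa+\sin\theta\,e_{\varphi}$, the component orthogonal to $\kappa$ integrates to zero, so, reading the (not absolutely convergent) integral as the monotone limit of $\int_{\{\theta\ge\varepsilon\}}$,
\[\int_{S^{2}}B(z,\omega)(\omega-\kappa)\,\bd\omega=\Big(\int_{S^{2}}B(z,\omega)(\cos\theta-1)\,\bd\omega\Big)\kappa=-M(|z|)\,\kappa,\]
whence $\int_{S^{2}}B(z,\omega)(v'-v)\,\bd\omega=-\tfrac12M(|z|)\,z$ and the linear term is bounded by $\tfrac12\|\nabla\phi\|_{L^{\infty}}|z|\,M(|z|)$. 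Adding the two contributions, $|\mathcal{T}(\phi)|\le\tfrac12\|\nabla\phi\|_{L^{\infty}}|z|M(|z|)+\tfrac14\|\nabla^{2}\phi\|_{L^{\infty}}|z|^{2}M(|z|)\le C\|\phi\|_{W^{2,\infty}}|z|(1+|z|)M(|z|)$, which is the claim.

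The only genuinely delicate point — everything else being routine Taylor bookkeeping — is that in the most singular regime neither $\mathcal{T}(\phi)$ nor the linear integral $\int_{S^{2}}B(z,\omega)(v'-v)\,\bd\omega$ is absolutely convergent (the integrand there is only $O(\theta)$ against a singularity of order $\theta^{-2}$, or worse, in the measure $\sin\theta\,\bd\theta$), so one must \emph{first} define $\mathcal{T}(\phi)$ as $\lim_{\varepsilon\downarrow0}\int_{\{\theta\ge\varepsilon\}}B(z,\omega)(\phi(v')-\phi(v))\,\bd\omega$ and invoke the rotational symmetry of $B$ about $\kappa$ \emph{before} passing to absolute values; this is precisely the cancellation mechanism of Alexandre--Villani \cite{villani2002noncutoff}. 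Finally, the same estimate for $\mathcal{T}^{n}$ built from the truncated kernel $B_{n}$ of \eqref{cross-section} holds verbatim, with a constant uniform in $n$, because $0\le B_{n}\le B$ pointwise; and the boundary $\partial\Omega$ plays no role, Proposition~\ref{prop-t} being a statement in velocity space for each fixed pair $(v,v_{*})$.
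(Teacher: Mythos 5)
The paper does not actually prove Proposition~\ref{prop-t}; it states it and attributes it to Alexandre--Villani \cite{villani2002noncutoff}, whose ``Symmetry-Induced Cancellation'' argument is precisely what you reproduce here. Your Taylor expansion with the azimuthal cancellation of the component of $\omega-\kappa$ orthogonal to $\kappa$, yielding $\int_{S^2}B(z,\omega)(v'-v)\,\bd\omega=-\tfrac12 M(|z|)z$ and the final bound $\tfrac12\|\nabla\phi\|_{\infty}|z|M(|z|)+\tfrac14\|\nabla^2\phi\|_{\infty}|z|^2M(|z|)$, is correct and is exactly the proof the paper is implicitly invoking.
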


Alexandre and Villani dealt with the whole space case. In this work, we consider the initial-boundary case. For $\mathcal{R}_1$ and $\mathcal{R}_2$,  ways of  using   conservation quantities \eqref{conservations}, Proposition \ref{prop-s} and Proposition \ref{prop-t}  to control  $\mathcal{R}_1$ and $\mathcal{R}_2$ in the whole space works for the bounded case.  Similarly, we can infer
\begin{lemma}
	\label{lemma-decompose-r-1}
	Let $B$ satisfy assumptions from \eqref{AV-first} to \eqref{AV-last}, and let $f$ satisfy \eqref{conservations}. Then $\mathcal{R}_1$ lies in the function space $L^\infty([0,T]; L^1(\Omega \times B_R(v)))$ for all $R>0$, where $B_R(v) = \{ v \in \mathbb{R}^3 \vert ~~ |v| \le R \}$.
\end{lemma}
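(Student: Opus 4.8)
The plan is to estimate $\mathcal{R}_1$ through its product form \eqref{decompose-r-1-rewrite}, namely $\mathcal{R}_1 = [f\beta'(f) - \beta(f)]\,(f *_v \mathcal{S})$, by bounding the two factors separately. First I would record the pointwise consequences of \eqref{beta-noncutoff}: since $\beta$ is concave with $\beta(0) = 0$ and $0 < \beta'(s) < C/(1+s)$, writing $\beta(f) - f\beta'(f) = \int_0^f (\beta'(s) - \beta'(f))\,\mathrm{d}s$ and using that $\beta'$ is nonincreasing gives $0 \le \beta(f) - f\beta'(f) \le \beta(f) \le \min(\beta'(0)f,\, C\log(1+f))$; in particular $|f\beta'(f) - \beta(f)| \le C\log(1+f) \le C(1 + |\log f|)$, which is the only, and genuinely unbounded, bound on the renormalization factor. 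For the second factor I would invoke Proposition~\ref{prop-s} to split $\mathcal{S} = \mathcal{S}_1 + \mathcal{S}_2$ with $\mathcal{S}_1 := \mathcal{S}\,\mathbf{1}_{\{|z| \le R_0\}} \in L^1(\mathbb{R}^3)$ and $\mathcal{S}_2 := \mathcal{S}\,\mathbf{1}_{\{|z| > R_0\}}$ obeying $|\mathcal{S}_2(z)| \le C(1+|z|^2)$ (with $R_0$ chosen from the $o(|z|^2)$ decay), so that for $v \in B_R(v)$,
\[ |(f *_v \mathcal{S})(v)| \le (f *_v |\mathcal{S}_1|)(v) + C_R \int_{\mathbb{R}^3} f(v_*)(1+|v_*|^2)\,\mathrm{d}v_* =: (f *_v |\mathcal{S}_1|)(v) + C_R\,\tilde{\rho}(x). \]

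Next I would combine the two factors over $\Omega \times B_R(v)$ and use \eqref{conservations}. For the $\mathcal{S}_1$-piece, Young's convolution inequality gives $\|f(t,x,\cdot) *_v |\mathcal{S}_1|\|_{L^1_v} \le \|\mathcal{S}_1\|_{L^1}\,\|f(t,x,\cdot)\|_{L^1_v}$, and convolution in $v$ with the fixed $L^1$ kernel $|\mathcal{S}_1|$ maps $L\log L$ into itself (interpolating its $L^1 \to L^1$ and $L^\infty \to L^\infty$ bounds on bounded velocity sets); hence, applying the elementary inequality $a\log(1+b) \le a\log(1+a) + b\log(1+b)$ with $a = (f *_v |\mathcal{S}_1|)(v)$, $b = f(v)$, together with $f\log(1+f) \le f + f|\log f|$, the contribution of this piece is dominated by $\int f|\log f| + \int f$, which is finite by \eqref{conservations}. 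For the $\mathcal{S}_2$-piece one is reduced to controlling
\[ \int_\Omega \tilde{\rho}(x)\Big(\int_{B_R(v)} \log(1+f(t,x,v))\,\mathrm{d}v\Big)\,\mathrm{d}x, \]
and here I would feed in the $v$-entropy through Jensen's inequality to get $\int_{B_R} \log(1+f)\,\mathrm{d}v \le C_R \log\!\big(e + \int_{B_R} f\,\mathrm{d}v\big)$, then close with a Young pairing of $L\log L$ against its exponential conjugate class, using that the local density $\int_{B_R} f\,\mathrm{d}v$ lies in $L\log L_x$ (from the relative entropy bound, again via Jensen against the global Maxwellian).

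The step I expect to be the main obstacle is precisely this last reduction: the renormalization defect $\beta(f) - f\beta'(f)$ is only logarithmically but genuinely unbounded, while the velocity convolution $f *_v \mathcal{S}$ is controlled in $v$ only through the second-moment weight, so proving that the product is honestly in $L^1(\Omega \times B_R(v))$ -- rather than merely finite in some averaged sense -- forces one to exhaust the entropy and relative-entropy estimates in \eqref{conservations}; the energy bound alone does not suffice. Everything else (Young's convolution inequality, the $L\log L$--exponential duality, Jensen for $v$-averages, and the convexity bounds on $\beta$) is routine. Finally, the same decomposition and all the estimates go through verbatim with $\Q$, $f$ replaced by the truncated $\Q^n$, $f^n$, since $B_n \le B$ pointwise and every bound used is monotone in the cross section; this yields the uniform-in-$n$ version of the lemma needed later.
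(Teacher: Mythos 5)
Your reduction to the two pieces $\mathcal{S}_1$ (locally $L^1$) and $\mathcal{S}_2$ (controlled at infinity), and the bound $|f\beta'(f)-\beta(f)|\le C\log(1+f)$, are both correct and match the decomposition \eqref{decompose-r-1-rewrite} together with Proposition~\ref{prop-s}. The $\mathcal{S}_1$-contribution is handled correctly: Jensen (or your interpolation argument) applied to $f\mapsto f*_v|\mathcal{S}_1|$ preserves the $L\log L$ bound, and the resulting integral is controlled by $\sup_t\int f(1+|\log f|)$ from \eqref{conservations}.

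The $\mathcal{S}_2$-contribution, however, does not close as you have written it. After reducing to
\[
\int_\Omega \tilde\rho(x)\log\!\big(e+\rho_R(x)\big)\,\mathrm{d}x,
\qquad \tilde\rho(x)=\int_{\mathbb{R}^3}f(1+|v_*|^2)\,\mathrm{d}v_*,\ \ \rho_R(x)=\int_{B_R}f\,\mathrm{d}v,
\]
the Young pairing $(s\log s-s,\,e^t)$ you invoke gives $\tilde\rho\log(e+\rho_R)\le \tilde\rho\log\tilde\rho-\tilde\rho+e+\rho_R$; it is therefore $\tilde\rho$, not $\rho_R$, that must lie in $L\log L_x$. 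Your stated ingredient — $\rho_R\in L\log L_x$ by Jensen against $\mathcal{M}$ — is true but irrelevant to the pairing, and the function that is actually needed, the \emph{energy-weighted} density $\tilde\rho$, is not in $L\log L_x$ under \eqref{conservations} alone. A concentration profile $f(x,v)=g(v/\lambda(x))\lambda(x)^{-3}$ with $\int g=\int g|w|^2=1$, $g$ compactly supported, and $\lambda(x)^2\sim (x_1\log^2 x_1)^{-1}$ near $x_1=0$ on $\Omega=(0,1)^3$ satisfies all the bounds in \eqref{conservations} (mass and entropy are $O(1)$ in $x$, energy $\lambda^2\in L^1_x$, and $\log\lambda\in L^1_x$), yet $\tilde\rho\log\tilde\rho\sim (x_1|\log x_1|)^{-1}\notin L^1_x$. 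The Jensen transfer of $\int f|\log f|$ to $\tilde\rho\log\tilde\rho$ would need a fourth velocity moment and a weighted entropy $\int f|v|^2|\log f|$, neither of which is available. Note also that in this very example the product $\Gamma(f)\cdot(f*\mathcal{S})$ \emph{does} stay integrable on $\Omega\times B_R$ (because $f$ is small on $B_R$ precisely where $\tilde\rho$ is large), which shows the failure is in the estimate, not necessarily in the statement: bounding $\Gamma(f)\le C\log(1+f)$ and $\mathcal{S}_2\le C(1+|z|^2)$ separately and then gluing them with Young's inequality loses the crucial coupling.

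The step that discards the needed structure is the passage from $\mathcal{S}(|z|)=o(|z|^2)$ in Proposition~\ref{prop-s} to the crude bound $|\mathcal{S}_2(z)|\le C(1+|z|^2)$; the ``little-$o$'' (equivalently, the vanishing tail of the second moment $\int_{|v_*|>R_0}f(1+|v_*|^2)\,\mathrm{d}v_*$ as $R_0\to\infty$) is exactly what the Alexandre--Villani argument keeps and what your write-up throws away before the Young duality is applied. The paper does not reproduce the proof (it refers the reader to \cite{villani2002noncutoff}, asserting that the whole-space estimate for $\mathcal{R}_1$ carries over verbatim to the bounded domain), so there is no in-text argument to compare against line by line; but the pairing you propose cannot be the one meant, because it asks for $\tilde\rho\in L\log L_x$, which is false.
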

\begin{lemma}
	\label{lemma-decompose-r-2}
	Let $B$ satisfy assumptions from \eqref{AV-first} to \eqref{AV-last}. Then $\mathcal{R}_2$ belongs to  the function space $L^\infty([0,T]; L^1(\Omega; W^{-2,1}(B_R(v))))$ for all $R>0$.
\end{lemma}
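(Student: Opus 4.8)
The plan is to deduce the stated regularity from the weak representation \eqref{decompose-r-2-rewrite} together with the cancellation estimate of Proposition~\ref{prop-t}, following \cite{villani2002noncutoff}. The structural point is that the whole estimate is \emph{pointwise in} $x$: $\mathcal{R}_2$ is built only from collisions, which act on the velocity variable alone, so the boundary $\partial\Omega$ plays no role whatsoever and the bounded-domain case is identical to the whole-space one.

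First I would reduce to a pairing estimate: it suffices to show that, for a.e.\ $(t,x)\in(0,T)\times\Omega$,
\[ \Big|\int_{\mathbb{R}^3}\mathcal{R}_2(t,x,v)\,\phi(v)\,\bd v\Big|\ \le\ C\,\|\phi\|_{W^{2,\infty}}\,\Lambda(t,x)\qquad\text{for all }\phi\in\mathcal{D}(B_R(v)), \]
with $\Lambda$ bounded in $L^\infty((0,T);L^1(\Omega))$; by duality, such a bound places $\mathcal{R}_2(t,x,\cdot)$ in $W^{-2,1}(B_R(v))$ with norm controlled by $\Lambda(t,x)$. By \eqref{decompose-r-2-rewrite} the left-hand side is $\big|\iint f(t,x,v_*)\,\beta(f(t,x,v))\,\mathcal{T}(\phi)\,\bd v\,\bd v_*\big|$, and Proposition~\ref{prop-t} bounds $|\mathcal{T}(\phi)|$ by $C\|\phi\|_{W^{2,\infty}}K(v-v_*)$ with $K(z):=|z|(1+|z|)M(|z|)$. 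So everything comes down to bounding $\iint f(t,x,v_*)\,\beta(f(t,x,v))\,K(v-v_*)\,\bd v\,\bd v_*$ by an $L^\infty_tL^1_x$ quantity.

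For this I would split $K = K\mathbf{1}_{\{|z|\le1\}} + K\mathbf{1}_{\{|z|>1\}}$. Near the origin the borderline-singularity assumption \eqref{AV-first} allows $M$ to be no more singular than $|z|^{-3}$, so $K\mathbf{1}_{\{|z|\le1\}}\in L^1(\mathbb{R}^3)$; at infinity the behaviour-at-infinity assumption forces $M$, hence $K\mathbf{1}_{\{|z|>1\}}$, to grow no faster than a fixed power of $|z|$ (no worse than $|z|^2$ for soft potentials), so that $K(z)\le K_{\mathrm{loc}}(z)+C(1+|v|^2)(1+|v_*|^2)$ with $K_{\mathrm{loc}}\in L^1(\mathbb{R}^3)$, up to the admissible growth exponent. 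Using $0<\beta'(f)<C(1+f)^{-1}$, so that $\beta(f)\le Cf$ and $\beta(f)\le C\log(1+f)$, and the a priori bounds \eqref{conservations} — uniform on $[0,T]$ by Theorem~\ref{theorem-boltzmann-approximate-lions-global} — one estimates the two pieces by products of velocity moments and $L\log L$ norms of $f(t,x,\cdot)$, which after integration in $x$ assemble, exactly as in \cite{villani2002noncutoff}, into a finite $L^\infty_tL^1_x$ bound. The only point requiring care is the set where $\phi(v)=0$ but $\phi(v')\ne0$ (large $|v|$), absorbed either through the pointwise bound of Proposition~\ref{prop-t} or by noting that there the angular singularity $\omega=\widehat{v-v_*}$ of $B$ is separated from the $\omega$'s for which $v'\in B_R(v)$.

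Finally, the approximate version for $\mathcal{R}_2^n$ is obtained verbatim: in \eqref{collision-original-cut}--\eqref{boltzmann-approximate-lions} the kernel is the truncation $B_n$ with $0\le B_n\le B$ and the extra prefactor $\big(1+\tfrac{1}{n}\int f^n\,\bd v\big)^{-1}\le 1$, so the decomposition \eqref{decompse}--\eqref{decompose-r-3} and Propositions~\ref{prop-s}--\ref{prop-t} hold for $\mathcal{R}_2^n$ with constants independent of $n$, and the resulting bound on $\mathcal{R}_2^n$ in $L^\infty([0,T];L^1(\Omega;W^{-2,1}(B_R(v))))$ is uniform in $n$. I expect the main obstacle to be the velocity bookkeeping of the third step — extracting the correct $x$-integrability from the interplay between the moment bounds, the entropy bound and the growth exponent of $M$ at infinity (the hard-potential case being the delicate one) — rather than anything tied to the bounded geometry, which, as already for $\mathcal{R}_1$, brings in nothing new.
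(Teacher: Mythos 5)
Your proposal is correct and matches the paper's (implicit) proof: the paper simply defers Lemma~\ref{lemma-decompose-r-2} (and Lemma~\ref{lemma-decompose-r-1}) to \cite{villani2002noncutoff}, observing that because the estimate is pointwise in $x$ the whole-space argument carries over unchanged to the bounded domain. You reconstruct exactly that argument --- duality in $W^{-2,1}(B_R)$, the representation \eqref{decompose-r-2-rewrite}, the cancellation bound of Proposition~\ref{prop-t}, a near/far split of the kernel $K(z)=|z|(1+|z|)M(|z|)$, and the a priori bounds \eqref{conservations} --- which is precisely what the cited reference supplies.
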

The term $\mathcal{R}_3$ is different. In \cite{villani2002noncutoff}, they work on the whole space. The advantage is that they don't need to consider the boundary effect. In details, after multiplying  \eqref{decompse} by some proper test function $\phi(v)$ and integrating over $\mathbb{R}^3 \times \mathbb{R}^3$,
\begin{align}
\label{different-with-whole-space-case}
\int_{\mathbb{R}^3\times\mathbb{R}^3} v \cdot \nabla_x \beta(f) \cdot \phi(v) \mathrm{d}v \mathrm{d}x = 0.
\end{align}
But when we consider the initial-boundary problem,
\begin{align}
\label{different-with-boundary-case}
\int_{\Omega\times\mathbb{R}^3} v \cdot \nabla_x \beta(f) \cdot \psi(v) \mathrm{d}x \mathrm{d}v \neq 0.
\end{align}
But on the other hand, from  \eqref{est-theorem-approximate-lions-global-density} and \eqref{est-theorem-approximate-lions-global-energy},  recalling $\gamma_- f = g$, we can get
\begin{align}
\label{conservation-boundary}
\int_{0}^{t}\int_{\Sigma_+} (1 + |v|^2)\gamma_+ f |\mathrm{n}(x)\cdot v| \mathrm{d}v \mathrm{d}\sigma_x \mathrm{d}s + \int_{0}^{t}\int_{\Sigma_-} (1 + |v|^2)\gamma_-f |\mathrm{n}(x)\cdot v| \mathrm{d}v \mathrm{d}\sigma_x \mathrm{d}s \le C(T).
\end{align}
Then $\mathcal{R}_3$ can be controlled as following:
\begin{lemma}
	\label{lemma-decompose-r-3}
	Let $B$ satisfy assumption  from \eqref{AV-first} to \eqref{AV-last}, and let $f$ satisfy \eqref{conservations} and \eqref{conservation-boundary}. Then $\mathcal{R}_3$ lies in $L^\infty([0,T]; L^1(\Omega \times B_R(v)))$ for all $R>0$.
\end{lemma}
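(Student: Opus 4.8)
\medskip\noindent
The plan is to exploit the fact that $\mathcal{R}_3$ carries a definite sign, so that it can be read off from the renormalized transport identity once the two ``good'' pieces $\mathcal{R}_1,\mathcal{R}_2$ and the transport term are under control; the only feature not present in the whole-space analysis of \cite{villani2002noncutoff} is the boundary flux, which is precisely what the a priori bound \eqref{conservation-boundary} is designed to absorb. First, since $\beta$ is concave ($\beta''<0$ in \eqref{beta-noncutoff}), for all $a,b\ge 0$ one has $\beta(b)-\beta(a)-\beta'(a)(b-a)\le 0$; taking $a=f$, $b=f'$ and using $B\ge 0$, $f'_*\ge 0$ in \eqref{decompose-r-3} gives $\mathcal{R}_3\le 0$ pointwise, hence $|\mathcal{R}_3|=-\mathcal{R}_3$. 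Fixing $R>0$, I would choose a nonnegative $\psi=\psi(v)\in C_c^\infty(\mathbb{R}^3)$ with $\psi\equiv 1$ on $B_R$ and $\mathrm{Supp}\,\psi\subset B_{R+1}$; it then suffices to bound $\int (-\mathcal{R}_3)\,\psi\,\bd x\bd v$ uniformly on $[0,T]$.

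From the decomposition \eqref{decompse} we have $-\mathcal{R}_3=\mathcal{R}_1+\mathcal{R}_2-\beta'(f)\Q(f,f)$. Integrated against $\psi$ over $\Omega\times\mathbb{R}^3$, the $\mathcal{R}_1$-term is controlled, uniformly for $t\le T$, by Lemma~\ref{lemma-decompose-r-1}, and the $\mathcal{R}_2$-term by Lemma~\ref{lemma-decompose-r-2} through the pairing \eqref{decompose-r-2-rewrite} and the estimate of Proposition~\ref{prop-t} (the choice $\psi\in W^{2,\infty}$ is exactly what is needed to test the $W^{-2,1}_v$-valued $\mathcal{R}_2$). For the remaining term I use the renormalized equation, which at the approximate level is the equality $\partial_t\beta(f^n)+v\cdot\nabla_x\beta(f^n)=\beta'(f^n)\Q^n(f^n,f^n)$, test it against the $x$-independent function $\psi(v)$, and apply the Green formula of Lemma~\ref{green-formula-Lp}; since $\nabla_x\psi\equiv 0$ this yields, for $\tau\le T$,
\[
\int_0^{\tau}\!\!\int_{\Omega\times\mathbb{R}^3}\!\beta'(f)\Q(f,f)\,\psi\,\bd x\bd v\bd t
=\Big[\int_{\Omega\times\mathbb{R}^3}\!\beta(f)\,\psi\,\bd x\bd v\Big]_0^{\tau}
+\int_0^{\tau}\!\!\int_{\Sigma_+}\!\!\beta(\gamma_+f)\,\psi\,|\mathrm{n}\!\cdot\!v|\,\bd v\bd\sigma_x\bd t
-\int_0^{\tau}\!\!\int_{\Sigma_-}\!\!\beta(g)\,\psi\,|\mathrm{n}\!\cdot\!v|\,\bd v\bd\sigma_x\bd t.
\]
Because $0\le\beta(z)\le Cz$ by \eqref{beta-noncutoff}, the bracket is bounded by $C\|\psi\|_\infty(\|f(\tau)\|_{L^1}+\|f_0\|_{L^1})\le C(T)$, and each of the two boundary integrals by $C\|\psi\|_\infty\int_0^{\tau}\int_{\Sigma_\pm}(1+|v|^2)(\gamma_+f+g)\,|\mathrm{n}\!\cdot\!v|\,\bd v\bd\sigma_x\bd t\le C(T)$ thanks to \eqref{conservation-boundary} and \eqref{g-bound}.

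Combining the three bounds and integrating in time gives $\int_0^T\!\int_{\Omega\times B_R}|\mathcal{R}_3|\,\bd x\bd v\bd t\le C(T)$; together with the pointwise-in-$t$ estimates on $\mathcal{R}_1$, $\mathcal{R}_2$ and $\beta(f)$ this yields $\mathcal{R}_3\in L^\infty([0,T];L^1(\Omega\times B_R))$ for every $R>0$. The whole argument is unchanged when $(f,B,\Q)$ is replaced by $(f^n,B_n,\Q^n)$: every constant entering it depends only on \eqref{conservations}, \eqref{conservation-boundary}, Propositions~\ref{prop-s}--\ref{prop-t} and the bound $\|f^n\|_{L^\infty_t L^1((1+|v|^2))}\le C(T)$, all uniform in $n$, so the estimate for $\mathcal{R}_3^n$ is uniform in $n$ — which is what is needed to pass to the limit and to identify the limiting $\mathcal{R}_3$ as a nonpositive distribution.

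The main obstacle, and the only genuine difference from \cite{villani2002noncutoff}, lies in the transport step. In the whole-space (or torus) case $\int v\cdot\nabla_x\beta(f)\,\psi\,\bd x\bd v$ vanishes identically, cf. \eqref{different-with-whole-space-case}, so $\beta'(f)\Q\psi$ integrates to a pure time derivative; here it produces instead the boundary flux $\int_{\Sigma_+}\beta(\gamma_+f)\psi|\mathrm{n}\!\cdot\!v|-\int_{\Sigma_-}\beta(g)\psi|\mathrm{n}\!\cdot\!v|$, cf. \eqref{different-with-boundary-case}. Two points require care: (i) $\gamma_+f$ must be given a meaning for a solution that is merely $L^1$, which is exactly the content of Lemma~\ref{green-formula-Lp}; and (ii) this flux must be bounded uniformly, which is not visible from the equation itself and is the reason one first derives the boundary bound \eqref{conservation-boundary} from the global balances \eqref{est-theorem-approximate-lions-global-density}--\eqref{est-theorem-approximate-lions-global-energy}. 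A secondary subtlety is that only the time-integral of the time-derivative term is directly controlled, so the statement in $L^\infty_t$ is obtained by combining the time-integrated estimate with the sign of $\mathcal{R}_3$ and the pointwise-in-$t$ control of $\mathcal{R}_1,\mathcal{R}_2,\beta(f)$.
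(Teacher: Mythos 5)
Your proof follows the same route as the paper: test against a compactly-supported cutoff $\psi(v)$, invoke Lemmas~\ref{lemma-decompose-r-1}--\ref{lemma-decompose-r-2} for the $\mathcal{R}_1,\mathcal{R}_2$ pieces, control the time-boundary terms by $0\le\beta(z)\le Cz$ and \eqref{conservations}, absorb the transport flux through the Green formula and the boundary a~priori bound \eqref{conservation-boundary}, and finally use the sign of $\mathcal{R}_3$ to turn the bound on $\int\mathcal{R}_3\,\psi$ into a bound on $\int|\mathcal{R}_3|\,\psi$. One place where you are more careful than the paper's own write-up: since $\beta''<0$ the quantity $\beta(f')-\beta(f)-\beta'(f)(f'-f)$ is nonpositive, so indeed $\mathcal{R}_3\le 0$ and $|\mathcal{R}_3|=-\mathcal{R}_3$; the paper's closing line ``recalling that $\mathcal{R}_3\ge 0$'' is a slip (it would hold for convex $\beta$). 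For the argument to close one must, exactly as you do, work at the approximate level where $\partial_t\beta(f^n)+v\cdot\nabla_x\beta(f^n)=\mathcal{R}_1^n+\mathcal{R}_2^n+\mathcal{R}_3^n$ is an equality, since with the limiting inequality and $\mathcal{R}_3\le 0$ the bound $\int\mathcal{R}_3\psi\le\sum A_i$ would be vacuous; the paper's ``drop the superscript $n$'' remark is implicitly relying on this, and you make it explicit. One small caveat worth keeping in mind: the estimate obtained this way, in both your write-up and the paper's, is really a bound on $\int_0^T\!\int_{\Omega\times B_R}|\mathcal{R}_3|\,\bd x\,\bd v\,\bd t$, i.e.\ an $L^1_t L^1_{x,v}$ bound; the pointwise-in-$t$ statement in the lemma does not follow from the time-integrated identity alone without additional input, and your closing sentence invoking ``pointwise-in-$t$ control of $\mathcal{R}_1,\mathcal{R}_2,\beta(f)$'' does not actually supply it. This is a feature of the paper's proof as well, so you have faithfully reproduced its argument, including this looseness.
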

\begin{proof}
	Firstly, from \eqref{decompse},
	 \begin{align*}
	 \partial_t \beta(f) + v \cdot \nabla_x \beta(f) \ge  \mathcal{R}_1 + \mathcal{R}_2 + \mathcal{R}_3,
	 \end{align*}
	 multiplying the above equation with $\phi(v)$, satisfying $ \phi(v)|_{B_R} = 1, \phi(v) \ge 0,~ \phi(v) = 0, |v| \ge 2R$, then
	 integrating the resulting equation over $\mathcal{O}$,
	 \begin{align*}
	 \int_{0}^{t} \int_{\mathcal{O}}\mathcal{R}_3(s) \phi(v)  \mathrm{d}v \mathrm{d}x ds & \le - \int_{0}^{t} \int_{\mathcal{O}} \phi \mathcal{R}_2 \mathrm{d}v \mathrm{d}x ds - \int_{0}^{t} \int_\mathcal{O} \phi \mathcal{R}_2 \mathrm{d}v \mathrm{d}x ds \\
	 & + \int_{\mathcal{O}} \beta(f)(t) dv ds - \int_{\mathcal{O}} \beta(f_0) dvds \\
	 & + \int_{0}^t \int_{\mathcal{O}}v \cdot \nabla_x (\beta(f) \phi) \mathrm{d}v \mathrm{d}x ds\\
	 & :=  A_1(t) + A_2(t) + A_3(t) + A_4(t) + A_5(t).
	 \end{align*}
Recalling that \[ 0 < \beta(f) \le f,\]
using Lemma \ref{lemma-decompose-r-1} and Lemma \ref{lemma-decompose-r-2}, then there exists some constant dependent on $R$ and $T$ such that
\begin{align}
\label{est-lemma-r-3-a-1-to-4}
\sum_{i=1}^4 |A_i(t)| \le C(T, R),~~ t \le T.
\end{align}	
$A_5$ is more complicated. Integrating it by parts, recalling
\[ 0 \le \phi(v) \le 1, ~~ 0 \le \beta(\gamma f ) \le \gamma f, \gamma_\pm f \ge 0,  \]
 we have
\begin{align}
\begin{split}
\label{est-lemma-r-3-a-5}
\int_{0}^t v \cdot \nabla_x (\beta(f) \phi) \mathrm{d}v \mathrm{d}x ds = &  - \int_{0}^{t} \int_{\Sigma_-} \beta(\gamma_-f)(t, x,v) \phi(v) |\mathrm{n}(x)\cdot v| \mathrm{d}v \mathrm{d}\sigma_x \mathrm{d}s \\
& + \int_{0}^{t} \int_{\Sigma_+} \beta(\gamma_+ f)(t, x,v) \phi(v) |\mathrm{n}(x)\cdot v| \mathrm{d}v \mathrm{d}\sigma_x \mathrm{d}s\\
& \le \int_{0}^{t} \int_{\Sigma_-} \gamma_- f(t, x,v) |\mathrm{n}(x)\cdot v| \mathrm{d}v \mathrm{d}\sigma_x \mathrm{d}s \\
& + \int_{0}^{t} \int_{\Sigma_+} \gamma_+f(t, x,v)  |\mathrm{n}(x)\cdot v| \mathrm{d}v \mathrm{d}\sigma_x \mathrm{d}s\\
& \le C(T),
\end{split}
\end{align}	
where we have used \eqref{conservation-boundary}.

Summing \eqref{est-lemma-r-3-a-1-to-4} and \eqref{est-lemma-r-3-a-5} up, recalling that
\[ \mathcal{R}_3 \ge 0,   \]
we complete the proof of this lemma.

	\begin{remark}
	\label{remark-renormalized-formulation-approximate}
	Noticing that the truncated cross section $B_n$ also satisfies assumption \eqref{Grad-cutoff-1} and \eqref{Grad-cutoff-2} and $0 < \big(\frac{1}{\footnotesize 1 + \frac{1}{n}\int f^n dv}\big) \le 1$ , so in the similar way,  we can prove that   the corresponding  $\mathcal{R}_1^n, \mathcal{R}_2^n$ and $\mathcal{R}_3^n$ corresponding to \eqref{boltzmann-approximate-lions} ,
	\begin{align*}
	& \mathcal{R}_1^n = [ f^n\beta'(f^n) - \beta(f^n) ]\int_{\mathbb{R}^3\times S^2}dv_* d\omega B_n(f_*^{n'} - f_*),\\
	& \mathcal{R}_2^n =\int_{\mathbb{R}^3\times S^2}dv_* d\omega B_n[f_*^{n'} \beta(f^{n'})- f_*^n\beta(f^n)],\\
	& \mathcal{R}_3^n=\int_{\mathbb{R}^3\times S^2}dv_* d\omega B_n f_*^{n'}\big(\beta(f^{n'}) - \beta(f^n) - \beta'(f^n)(f^{n'}-f^n)\big).
	\end{align*}
	
	also satisfy   Lemma \ref{lemma-decompose-r-1}, Lemma \ref{lemma-decompose-r-2} and Lemma \ref{lemma-decompose-r-3}.
\end{remark}

\end{proof}

\section{Weak compactness and global existence }
This whole section is devoted to  prove Theorem \ref{main-result}. The key tools is $L^1$ weak compactness theorem, Dunford-Pettis Lemma and De la Vall\'ee-Poussin uniform integrability criterion. One can check \cite{evans2015measureandfinepropertiesoffunctions} for details.  We need to consider the interior parts(Theorem \ref{theorem-interior}) and boundary parts. The interior parts have been done by Alexandre and Villani in \cite{villani2002noncutoff}. We just quote it.

\subsection{Interior domains}
From estimates \eqref{est-theorem-approximate-lions-global-density}, \eqref{est-theorem-approximate-lions-global-energy} and \eqref{est-theorem-app-lions-entropy-abs}, using Dunford-Pettis Lemma, we can conclude that there exist some $f$ such that
\begin{align}
\label{weak-convergence-f}
f^n \rightharpoonup f,~~L^1 ((0,T) \times \mathcal{O}).
\end{align}

Moreover, by simple calculation, the approximate cross section chosen $B_n$ in \eqref{cross-section} also   satisfies the following assumption.
For all $n$,
\begin{align}
\label{cross-section-singularity-1}
B_n(z, \omega) \ge \Phi_0(|z|)b_{0,n}(\kappa \cdot \omega),~~ \kappa = \frac{z}{|z|},
\end{align}
for some fixed continuous function $\Phi_0(|z|)$ such that $\Phi_0(|z|)>0$ if $z\neq 0$, and
\begin{align}
\label{cross-section-singularity-2}
\int_{S^2} \liminf_{n\to \infty} b_{0,n}(\kappa\cdot \omega) d\omega = \infty.
\end{align}

\begin{theorem}[Extended Stability \cite{villani2002noncutoff}]
	\label{theorem-interior}
Let $B$ satisfy assumption from \eqref{AV-first} to \eqref{AV-last}. Let $(f^n)_{n\in \mathbb{N}}$ be a sequence of solutions to \eqref{boltzmann-approximate-lions} with initial data \eqref{app-f} and boundary conditions in Theorem \ref{theorem-boltzmann-approximate-lions-global} and   satisfy the natural {\it \'a priori} bounds \eqref{conservations} and \eqref{conservation-boundary}. Assume without loss generality that
\[ f^n \rightharpoonup f ~\text{in}~ L^p([0,T], L^1(\Omega\times\mathbb{R}^3)),~~1 \le p < \infty.  \]
Then
\begin{enumerate}
	\item $f^n \to f$ in  $L^p([0,T], L^1(\Omega\times\mathbb{R}^3))$;
	\item for all functions $\beta \in C^2(\mathbb{R}^+, \mathbb{R}^+)$ satisfying
	\[ \beta(0) = 0, 0 < \beta'(f) < \frac{C}{ 1 +f}, \beta''(f) < 0,  \]	
	there exists a defect measure $\nu$ such that the  equality
	\[   \partial_t \beta(f) + v \cdot \nabla_x \beta(f) = \beta'(f)\Q(f,f) + \nu  \]
	holds in the sense of distributions.
	\item for each $\phi \in \mathcal{D}([0,T]\times\bar{\Omega} \times\mathbb{R}^3)$,
	\[ \int_0^T \int_\Omega \mathcal{R}_1^n \phi \bd x \bd v \bd s \to \int_0^T \int_\Omega \mathcal{R}_1 \phi \bd x \bd v \bd s,  \]
	\[ \int_0^T \int_\Omega \mathcal{R}_2^n \phi \bd x \bd v \bd s \to \int_0^T \int_\Omega \mathcal{R}_2 \phi \bd x \bd v \bd s,  \]
	and moreover, if $\phi$ is a non-negative function,
	\[ \int_0^T \int_\Omega \mathcal{R}_3 \phi \bd x \bd v \bd s \le \liminf_{n\to \infty} \int_0^T \int_\Omega \mathcal{R}_3^n \phi \bd x \bd v \bd s.  \]
\end{enumerate}
\end{theorem}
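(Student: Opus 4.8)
\emph{Strategy.} The plan is to transport the stability argument of Alexandre--Villani \cite{villani2002noncutoff} (which itself extends DiPerna--Lions \cite{lions1989cauchy}) to the bounded domain $\Omega$. What makes this possible is that every ingredient there is \emph{local in the space variable}---the velocity--averaging lemma, the entropy and entropy--dissipation bounds, the cancellation estimates of Propositions \ref{prop-s}--\ref{prop-t}---so that, after multiplying by a cutoff $\chi\in C^\infty_c(\Omega)$ whenever needed, nothing feels $\partial\Omega$. The boundary enters only through the a priori trace bound \eqref{conservation-boundary}, which is already absorbed into Lemma \ref{lemma-decompose-r-3}. The three conclusions then reduce to: (a) strong $L^1$ compactness of $(f^n)$; (b) passing to the limit in the decomposition \eqref{decompse}; and (c) a Fatou--type one--sided control of the sign--definite remainder $\mathcal R_3^n$, which is where the defect measure is produced.

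\textbf{Step 1 (conclusion (1)).} We renormalize \eqref{boltzmann-approximate-lions}: for $\beta$ as in \eqref{beta-noncutoff},
\[
\partial_t\beta(f^n)+v\cdot\nabla_x\beta(f^n)=\beta'(f^n)\Q^n(f^n,f^n)=\tfrac{1}{1+\frac1n\int f^n\,dv}\big(\mathcal R_1^n+\mathcal R_2^n+\mathcal R_3^n\big).
\]
By Lemmas \ref{lemma-decompose-r-1}--\ref{lemma-decompose-r-3} and Remark \ref{remark-renormalized-formulation-approximate} the right--hand side is bounded, uniformly in $n$, in $L^\infty([0,T];L^1(\Omega;W^{-2,1}(B_R)))$ for every $R>0$, while \eqref{est-theorem-app-lions-entropy-abs} makes $\beta(f^n)$ bounded and equi--integrable in $L^\infty([0,T];L^1(\mathcal O))$. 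We then invoke a velocity--averaging lemma valid for $L^1$ densities whose source lies in $W^{-2,1}$ in velocity to conclude that $\int_{\mathbb R^3}\beta(f^n)\,\psi(v)\,dv$ is relatively compact in $L^1_{loc}((0,T)\times\Omega)$ for all $\psi\in C^\infty_c(\mathbb R^3)$ and all admissible $\beta$. Running this through the DiPerna--Lions argument---comparing the renormalizations $z\mapsto z/(1+\delta z)$ as $\delta\downarrow0$ and using the uniform bound $\int_0^T\mathcal D(f^n)\,ds\le C$ of \eqref{est-theorem-app-lions-entropy} to exclude oscillations produced by the gain term---upgrades $f^n\rightharpoonup f$ to $f^n\to f$ a.e.\ and strongly in $L^1((0,T)\times\mathcal O)$; strong convergence in $L^p([0,T];L^1(\Omega\times\mathbb R^3))$ for $p<\infty$ then follows from the uniform bounds of Theorem \ref{theorem-boltzmann-approximate-lions-global} and dominated convergence.

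\textbf{Step 2 (conclusions (2)--(3)).} The prefactors $f^n\beta'(f^n)-\beta(f^n)$ and $\beta(f^n)$ are bounded continuous functions of $f^n$, hence converge strongly, while $(1+\frac1n\int f^n\,dv)^{-1}\in(0,1]$ converges to $1$ and is harmless. For $\mathcal R_1^n=[f^n\beta'(f^n)-\beta(f^n)]\,f^n*_v\mathcal S_n$, Proposition \ref{prop-s} (local integrability and $\mathcal S(|z|)=o(|z|^2)$) together with $\mathcal S_n\to\mathcal S$ and the uniform $(1+|v|^2)$--moment bound give $f^n*_v\mathcal S_n\to f*_v\mathcal S$ in $L^1_{loc}$, so $\int\mathcal R_1^n\phi\to\int\mathcal R_1\phi$ for $\phi\in\mathcal D([0,T]\times\bar\Omega\times\mathbb R^3)$. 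For $\mathcal R_2^n$, testing against $\phi$ and using \eqref{decompose-r-2-rewrite} we have $\int\mathcal R_2^n\phi=\iint f^n_*\beta(f^n)\,\mathcal T_n(\phi)$, where $|\mathcal T_n(\phi)|\le C\|\phi\|_{W^{2,\infty}}|v-v_*|(1+|v-v_*|)M(|v-v_*|)$ uniformly in $n$ by Proposition \ref{prop-t} and $\mathcal T_n(\phi)\to\mathcal T(\phi)$ pointwise, so dominated convergence gives $\int\mathcal R_2^n\phi\to\int\mathcal R_2\phi$. Finally $\mathcal R_3^n$ is sign--definite and bounded in $L^1_{loc}$ (Lemma \ref{lemma-decompose-r-3}, Remark \ref{remark-renormalized-formulation-approximate}); as $f^n\to f$ a.e.\ and $B_n\uparrow B$, Fatou's lemma yields $\int_0^T\int_\Omega\mathcal R_3\,\phi\le\liminf_n\int_0^T\int_\Omega\mathcal R_3^n\,\phi$ for every $\phi\ge0$, which is the last assertion of (3). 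Extracting a subsequence along which $\mathcal R_3^n\rightharpoonup\mu$ (so $\mu\ge\mathcal R_3$ tested against nonnegative functions) and using $\beta(f^n)\to\beta(f)$ strongly to pass to the limit in the transport part, and writing $\beta'(f)\Q(f,f):=\mathcal R_1+\mathcal R_2+\mathcal R_3$ (the decomposition \eqref{decompse} read at the limit, meaningful by \cite{villani2002noncutoff} and Lemma \ref{lemma-decompose-r-3} since $f$ inherits \eqref{conservations}), we obtain
\[
\partial_t\beta(f)+v\cdot\nabla_x\beta(f)=\mathcal R_1+\mathcal R_2+\mu=\beta'(f)\Q(f,f)+\nu,\qquad \nu:=\mu-\mathcal R_3\ge0,
\]
and $\nu$ is a finite nonnegative measure on $(0,T)\times\mathcal O$ since every term above is uniformly bounded in the sense of measures by \eqref{conservations}--\eqref{conservation-boundary}. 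This is conclusion (2).

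\emph{Main obstacle.} The hardest part will be Step 1, upgrading weak $L^1$ convergence to strong convergence. In the non--cutoff regime $\Q^n$ admits no splitting into separately integrable gain and loss parts, so the source of the renormalized equation is controlled only in $W^{-2,1}$ in $v$, and the passage from compactness of velocity averages to compactness of $f^n$ itself leans essentially on the entropy--dissipation bound. The one technicality that is genuinely new relative to \cite{villani2002noncutoff} is to check that the spatial cutoff near $\partial\Omega$ does not damage these estimates; it does not, precisely because velocity averaging is local in $(t,x)$ and the only boundary input is \eqref{conservation-boundary}, already used in Lemma \ref{lemma-decompose-r-3}.
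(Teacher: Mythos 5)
Your proposal is correct and follows essentially the same route as the paper's proof: localize the Alexandre--Villani stability argument by testing against $\phi\in\mathcal{D}((0,T)\times\mathcal{O})$ so that no boundary term appears, obtain strong $L^1$ compactness of $(f^n)$ from the renormalized formulation, velocity averaging and the entropy--dissipation bound, and construct the defect measure as $\nu=\mathbf{m}-\mathcal{R}_3\,\mathrm{d}x\,\mathrm{d}v\,\mathrm{d}s$ where $\mathbf{m}$ is a weak-$*$ limit of the bounded measure sequence $(\mathcal{R}_3^n\,\mathrm{d}x\,\mathrm{d}v\,\mathrm{d}s)$, positivity coming from Fatou. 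The one point the paper makes explicitly that you leave implicit is why conclusion (3) holds for test functions in $\mathcal{D}([0,T]\times\bar\Omega\times\mathbb{R}^3)$ that need not vanish on $\partial\Omega$: Lemmas \ref{lemma-decompose-r-1} and \ref{lemma-decompose-r-2} require compact support only in the velocity variable, so the convergences $\int\mathcal{R}_i^n\phi\to\int\mathcal{R}_i\phi$ ($i=1,2$) and the Fatou inequality for $\mathcal{R}_3$ survive when $\phi$ is supported up to the spatial boundary; it would be worth stating this explicitly when you invoke the Propositions.
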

\begin{proof}
For the first two items,  they directly come from \cite{villani2002noncutoff}.   Indeed,    multiplying the first equation of \eqref{boltzmann-approximate-lions} by $\phi \in D((0,T)\times\mathcal{O})$,
 	\begin{align*}
 	&\int_{0}^{T} \int_{\mathcal{O}} \big( \beta(f^n)(\partial_t \phi  + v \cdot \nabla_x \phi) + Q^n(f^n, f^n)\beta'(f^n) \phi\big) \mathrm{d}x \mathrm{d}v \mathrm{d}t = 0,
 	\end{align*}
 then   the left proof is similar to \cite{villani2002noncutoff}.	Here we focus on the defect measure $\nu$.

Recalling
 \[    Q^n(f^n, f^n)\beta'(f^n) = \mathcal{R}_1^n + \mathcal{R}_2^n + \mathcal{R}_3^n, \]
 with the help of Remark \ref{remark-renormalized-formulation-approximate},  we can find that $(\mathcal{R}_3^n \bd x \bd v \bd s)$ is a bounded measure-value sequence on $\mathcal{D}'((0,T)\times\mathcal{O})$. Then there exist some measure $\bd \mathbf{m}$ such that
 \[ \int_{0}^{T} \int_{\mathcal{O}}   \mathcal{R}_3^n \phi  \mathrm{d}x \mathrm{d}v \mathrm{d}s  \to \int_{0}^{T} \int_{\mathcal{O}}    \phi \bd \mathbf{m},~~~~\text{for any} ~\phi ~\in~ \mathcal{D}((0,T)\times\mathcal{O}).   \]
 Then
 \[ \bd \nu = \bd \mathbf{m} - \mathcal{R}_3   \mathrm{d}x \mathrm{d}v \mathrm{d}s.  \]
If $\phi \ge 0$,  by Fatou Lemma,   we can deduce that $\bd \nu$ is a positive measure.

As for the third entry, noticing that Lemma \ref{lemma-decompose-r-1} and Lemma \ref{lemma-decompose-r-2} only require that the velocity variable of test function has compact support. Using the strong compactness of $f^n$, the third item can be verified too by the argument in \cite{villani2002noncutoff}.

\end{proof}
\subsection{Boundary parts}
The boundary parts are more complicated. First, Lemma \ref{green-formula-Lp} works only for the equality. But the solution in Theorem \ref{theorem-interior} is just a inequality.  So it is rarely possible to define trace of solutions in Definition \ref{def-renormalized}  by Lemma \ref{green-formula-Lp}. In the meantime, we have defined a meaningful trace $\gamma_\pm f^n$ for the approximate solution $f^n$. The good new is that the trace sequence is also weakly compact in $L^1$ space, namely, according to \eqref{est-theorem-app-lions-entropy-abs}, \eqref{est-theorem-approximate-lions-global-density} and \eqref{est-theorem-approximate-lions-global-energy}, by Dunford-Pettis Lemma,   we can infer: there exists some $f_\gamma \in L^1((0,T) \times \Sigma_+, |\mathrm{n}(x)\cdot v| \mathrm{d}v \mathrm{d}\sigma_x \mathrm{d}s )$ such that
\begin{align*}
& \gamma_+ f^n \rightharpoonup f_\gamma,~~ L^1((0,T) \times \Sigma_+, |\mathrm{n}(x)\cdot v| \mathrm{d}v \mathrm{d}\sigma_x \mathrm{d}s ),\\
& \int_{0}^{t}\int_{\Sigma_+}   \gamma_+ f^n |\mathrm{n}(x)\cdot v| \mathrm{d}v \mathrm{d}\sigma_x\mathrm{d}s \to  \int_{0}^{t}\int_{\Sigma_+}    f_\gamma |\mathrm{n}(x)\cdot v| \mathrm{d}v \mathrm{d}\sigma_x\mathrm{d}s.
\end{align*}
On the other hand, for the approximate solutions $(f^n)$ and its trace $\gamma_+ f^n$,  recalling that
\begin{align}
	\label{est-app-lions-density-cp}
	\begin{split}
	& \int_{\mathcal{O}} f^n(t) \mathrm{d}x \mathrm{d}v + \int_{0}^{t}\int_{\Sigma_+}   \gamma_+ f^n |\mathrm{n}(x)\cdot v| \mathrm{d}v \mathrm{d}\sigma_x\mathrm{d}s \\
	& =    \int_{\mathcal{O}}  f_0 \mathrm{d}x \mathrm{d}v + \int_{0}^{t}\int_{\Sigma_-}   g  , ~ t > 0.
	\end{split}
	\end{align}
By theorem \ref{theorem-interior},  \[ f^n \rightharpoonup f, ~~ L^1(\mathcal{O}),\]
thus we get
\begin{align*}
\int_{\mathcal{O}} f^n(s) \mathrm{d}x \mathrm{d}v \to \int_{\mathcal{O}} f(s) \mathrm{d}x \mathrm{d}v,\qquad s \le T.
\end{align*}
Then
similarly, we can deduce
\begin{align*}
& v \gamma_+  f^n \rightharpoonup v f_\gamma,~~ L^1((0,T) \times  \Sigma_+, |\mathrm{n}(x)\cdot v| \mathrm{d}v \mathrm{d}\sigma_x \mathrm{d}s ),\\
& \int_{0}^{t}\int_{\Sigma_+}  v \gamma_+ f^n |\mathrm{n}(x)\cdot v| \mathrm{d}v \mathrm{d}\sigma_x\mathrm{d}s \to  \int_{0}^{t}\int_{\Sigma_+}   v f_\gamma |\mathrm{n}(x)\cdot v| \mathrm{d}v \mathrm{d}\sigma_x\mathrm{d}s.
\end{align*}
Recalling that $(1 + |v|)f_0^n \to (1 + |v|) f_0,~\text{in}~~ L^1(\mathcal{O})$,~thus, we infer that
	\begin{align}
	\label{est-lions-density-0}
	\begin{split}
	& \int_{\mathcal{O}} f \mathrm{d}x \mathrm{d}v + \int_{0}^{t}\int_{\Sigma_+}   f_\gamma |\mathrm{n}(x)\cdot v| \mathrm{d}v \mathrm{d}\sigma_x\mathrm{d}s \\
	& =    \int_{\mathcal{O}}  f_0 \mathrm{d}x \mathrm{d}v + \int_{0}^{t}\int_{\Sigma_-}   g |\mathrm{n}(x)\cdot v| \mathrm{d}v \mathrm{d}\sigma_x\mathrm{d}s, \qquad t > 0,
	\end{split}
	\end{align}
	and
	\begin{align}
\label{est-lions-m}
\begin{split}
& \int_{\mathcal{O}} v f \mathrm{d}x \mathrm{d}v + \int_{0}^{t}\int_{\Sigma_+}  v f_\gamma |\mathrm{n}(x)\cdot v| \mathrm{d}v \mathrm{d}\sigma_x\mathrm{d}s \\
& =    \int_{\mathcal{O}} v f_0 \mathrm{d}x \mathrm{d}v + \int_{0}^{t}\int_{\Sigma_-} v  g |\mathrm{n}(x)\cdot v| \mathrm{d}v \mathrm{d}\sigma_x\mathrm{d}s, \qquad t > 0.
\end{split}
\end{align}

The left goal is to show that $f_\gamma$ satisfies \eqref{est-definition-trace-of-renormalized-defect-measure}, namely $\gamma_+ f := f_\gamma$.

On the other hand, multiplying the first equation in \eqref{boltzmann-approximate-lions} by a positive test function $\phi \in \mathcal{D}([0,T]\times\bar{\Omega}\times\mathbb{R}^3)$,
 	\begin{align}
 	\label{ff}
 	\begin{split}
&\int_{0}^{T} \int_{\mathcal{O}} \big( \beta(f^n)(\partial_t \phi  + v \cdot \nabla_x \phi) + Q^n(f^n, f^n)\beta'(f^n) \phi\big) \mathrm{d}x \mathrm{d}v \mathrm{d}t \\
&=   \int_{0}^{T}\int_{\Sigma_+} \beta(\gamma_+ f^n)(s) \phi \cdot {|\mathrm{n}(x) \cdot v|} \bd v \bd \sigma_x \bd s   - \int_{0}^{T}\int_{\Sigma_-} \beta(g)(s)\phi \cdot |\mathrm{n}(x) \cdot v| \bd v \bd \sigma_x \bd s\\
& +  \int_{\mathcal{O}} \beta(f^n)(T)(\tau, \cdot) \phi \mathrm{d}x \mathrm{d}v   -  \int_{\mathcal{O}} \beta(f^n)(0)(\tau, \cdot) \phi \mathrm{d}x \mathrm{d}v.
\end{split}
\end{align}
As $n$ goes to infinity,   there exists a uniform lower bound to the left hand of \eqref{ff}. Indeed, by Theorem \ref{theorem-interior},
\begin{align}
\label{ff0}
\begin{split}
& \int_{0}^{T} \int_{\mathcal{O}} \big( \beta(f)(\partial_t \phi  + v \cdot \nabla_x \phi) + Q(f, f)\beta'(f) \phi\big) \mathrm{d}x \mathrm{d}v \mathrm{d}t \\
&  \le \liminf_{n\to \infty}\int_{0}^{T} \int_{\mathcal{O}} \big( \beta(f^n)(\partial_t \phi  + v \cdot \nabla_x \phi) + Q^n(f^n, f^n)\beta'(f^n) \phi\big) \mathrm{d}x \mathrm{d}v \mathrm{d}t.
\end{split}
\end{align}

At the same time, we can get a uniform upper bound for the right hand of \eqref{ff}. In details, since $(f^n)$ is a strong convergence sequence in $L^\infty([0,T];L^1(\mathcal{O}))$,
\begin{align}
\label{ff1}
\begin{split}
&\limsup_{n\to \infty}\bigg( \int_{0}^{T}\int_{\Sigma_+} \beta(\gamma_+ f^n)(s) \phi \cdot {|\mathrm{n}(x) \cdot v|} \bd v \bd \sigma_x \bd s   - \int_{0}^{T}\int_{\Sigma_-} \beta(g)(s)\phi \cdot |\mathrm{n}(x) \cdot v| \bd v \bd \sigma_x \bd s\\
& +  \int_{\mathcal{O}} \beta(f^n)(T)(\tau, \cdot) \phi \mathrm{d}x \mathrm{d}v   -  \int_{\mathcal{O}} \beta(f^n)(0)(\tau, \cdot) \phi \mathrm{d}x \mathrm{d}v\bigg)\\
&  \le \bigg( \limsup_{n\to \infty} \int_{0}^{T}\int_{\Sigma_+} \beta(\gamma_+ f^n)(s) \phi \cdot {|\mathrm{n}(x) \cdot v|} \bd v \bd \sigma_x \bd s   - \int_{0}^{T}\int_{\Sigma_-} \beta(g)(s)\phi \cdot |\mathrm{n}(x) \cdot v| \bd v \bd \sigma_x \bd s\\
& + \limsup_{n\to \infty} \int_{\mathcal{O}} \beta(f^n)(T)(\tau, \cdot) \phi \mathrm{d}x \mathrm{d}v   - \liminf_{n\to \infty} \int_{\mathcal{O}} \beta(f^n)(0)(\tau, \cdot) \phi \mathrm{d}x \mathrm{d}v\bigg)\\
&  \le \bigg( \limsup_{n\to \infty} \int_{0}^{T}\int_{\Sigma_+} \beta(\gamma_+ f^n)(s) \phi \cdot {|\mathrm{n}(x) \cdot v|} \bd v \bd \sigma_x \bd s   - \int_{0}^{T}\int_{\Sigma_-} \beta(g)(s)\phi \cdot |\mathrm{n}(x) \cdot v| \bd v \bd \sigma_x \bd s\\
& +  \int_{\mathcal{O}} \beta(f)(T)(\tau, \cdot) \phi \mathrm{d}x \mathrm{d}v   -  \int_{\mathcal{O}} \beta(f_0)(\tau, \cdot) \phi \mathrm{d}x \mathrm{d}v\bigg).
\end{split}
\end{align}
We claim that for concave function $\beta$ and non-negative test function $\phi$, if $ \gamma_+ f^n \rightharpoonup   f_\gamma$ in $L^1((0,T)\times\Sigma_+; {|\mathrm{n}(x) \cdot v|} \bd v \bd \sigma_x \bd s)$, then
\begin{align}
\label{ff2}
\limsup_{n\to \infty} \int_{0}^{T}\int_{\Sigma_+} \beta(\gamma_+ f^n)(s) \phi \cdot {|\mathrm{n}(x) \cdot v|} \bd v \bd \sigma_x \bd s \le   \int_{0}^{T}\int_{\Sigma_+} \beta(f_\gamma)(s) \phi \cdot {|\mathrm{n}(x) \cdot v|} \bd v \bd \sigma_x \bd s.
\end{align}
Then by (\ref{ff}-\ref{ff2}), we finally verify that $f_\gamma$ satisfies \eqref{est-definition-trace-of-renormalized-defect-measure}, namely
  	\begin{align*}
 \begin{split}
 &\int_{0}^{T} \int_{\mathcal{O}} \big( \beta(f)(\partial_t \phi  + v \cdot \nabla_x \phi) + Q(f, f)\beta'(f) \phi\big) \mathrm{d}x \mathrm{d}v \mathrm{d}t \\
 & \le   \int_{0}^{T}\int_{\Sigma_+} \beta(\gamma_+ f)(s) \phi \cdot {|\mathrm{n}(x) \cdot v|} \bd v \bd \sigma_x \bd s   - \int_{0}^{T}\int_{\Sigma_-} \beta(g)(s)\phi \cdot |\mathrm{n}(x) \cdot v| \bd v \bd \sigma_x \bd s\\
 & +  \int_{\mathcal{O}} \beta(f)(T)(\tau, \cdot) \phi \mathrm{d}x \mathrm{d}v   -  \int_{\mathcal{O}} \beta(f_0)(\tau, \cdot) \phi \mathrm{d}x \mathrm{d}v.
 \end{split}
 \end{align*}
 \begin{proof}[Proof of \eqref{ff2}]
 	Recalling that $ \gamma_+ f^n \rightharpoonup  f_\gamma$,  for each positive $\phi$ with compact support, if we set $ \phi \cdot {|\mathrm{n}(x) \cdot v|} \bd v \bd \sigma_x \bd s $ as a new measure denoted by $\bd_\phi$, we can infer that up to a subsequence
 	\[ \gamma_+ f^n \rightharpoonup   f_\gamma, ~\text{in}~ L^1((0,T)\times\Sigma_+;\bd_\phi). \]
 	Secondly, noticing that $\beta$ is a concave function, by the lower upper semi-continuity of concave function with respect to weak convergence,
 	\[ \limsup_{n}\int_{0}^{T}\int_{\Sigma_+} \beta(\gamma_+ f^n) \bd_\phi \le \int_{0}^{T}\int_{\Sigma_+} \beta(f_\gamma) \bd_\phi. \]
 \end{proof}
So we can choose $\gamma_+ f := f_\gamma$. Then \eqref{est-lions-density-0} becomes
	\begin{align}
	\label{est-lions-density}
	\begin{split}
	& \int_{\mathcal{O}} f \mathrm{d}x \mathrm{d}v + \int_{0}^{t}\int_{\Sigma_+}   \gamma_+ f \xmus \\
	& =    \int_{\mathcal{O}}  f_0 \mathrm{d}x \mathrm{d}v + \int_{0}^{t}\int_{\Sigma_-}   g \xmus , ~ t > 0.
	\end{split}
	\end{align}
As for the energy inequality, recalling that
\[ \gamma_+ f^n \rightharpoonup \gamma_+ f, ~\text{in},~ L^1((0,T)\times\Sigma_+;\xmus),  \]
then  for any fixed $m\in \mathbb{N}^+$, on the characteristic function of ball in $\mathbb{R}^3$ with radius $m$,  $\{v | |v| \le m  \}$ denoted by $\mathbf{1}_m$, we can infer that
\[ |v|^2 \mathbf{1}_{m}\gamma_+ f^n \rightharpoonup  |v|^2 \mathbf{1}_{m} \gamma_+ f, ~\text{in},~ L^1((0,T)\times\Sigma_+;\xmus),  \]
and
\[ |v|^2 \mathbf{1}_{m}  f^n \rightharpoonup  |v|^2 \mathbf{1}_{m}   f, ~\text{in},~ L^\infty((0,T); L^1(\mathcal{O}; \bd v \bd x)).  \]
By the lower semi-continuity of norm,
\begin{align*}
\begin{split}
& \sup_{0\le s \le t}\int_{\mathcal{O}} \mathbf{1}_{m} |v|^2f(s) \mathrm{d}x \mathrm{d}v + \int_{0}^{t}\int_{\Sigma_+} \mathbf{1}_{m}  |v|^2 \gamma_+ f |\mathrm{n}(x)\cdot v| \mathrm{d}v \mathrm{d}\sigma_x\mathrm{d}s \\
& \le    \int_{\mathcal{O}} |v|^2f_0 \mathrm{d}x \mathrm{d}v + \int_{0}^{t}\int_{\Sigma_-} |v|^2 g, \qquad t > 0.
\end{split}
\end{align*}
Taking $m$ to infinity, by Fatou lemma,  we deduce
		\begin{align}
		\label{est-lions-energy}
		\begin{split}
		& \sup_{0\le s \le t}\int_{\mathcal{O}} |v|^2f(s) \mathrm{d}x \mathrm{d}v + \int_{0}^{t}\int_{\Sigma_+}  |v|^2 \gamma_+ f |\mathrm{n}(x)\cdot v| \mathrm{d}v \mathrm{d}\sigma_x\mathrm{d}s \\
		& \le    \int_{\mathcal{O}} |v|^2f_0 \mathrm{d}x \mathrm{d}v + \int_{0}^{t}\int_{\Sigma_-} |v|^2 g \xmus, \qquad t > 0.
		\end{split}
		\end{align}
For the relative entropy inequality, noticing that $h(z)$ is a positive convex function, by the lower semi-continuity of convex functions, we deduce that
	\begin{align}
	\label{est-lions-entropy}
\begin{split}
& H(f|\mathcal{M}) + \int_{0}^{t}\int_{\Sigma_+}h(\gamma_+ f_{\mathcal{M} } ) |\mathrm{n}(x)\cdot v| \mathrm{d}v \mathrm{d}\sigma_x\mathrm{d}s +  \int_{0}^t \mathcal{D}(f)(s) \mathrm{d}x \mathrm{d}v \mathrm{d}s \\
&\le  H(f_0|\mathcal{M})  + \int_{0}^{t}\int_{\Sigma_-}h(g|{\mathcal{M} } ) |\mathrm{n}(x)\cdot v| \mathrm{d}v \mathrm{d}\sigma_x\mathrm{d}s,\qquad t \ge .
\end{split}
	\end{align}
	
\subsection{Local conservation law}
In this subsection, we focus on the  local conservation law: local conservation law and global conservation law.
\subsubsection{Local mass conservation law}
 Choosing function $\phi_1 \in \mathcal{D}([0,T]\times\bar{\Omega})$, multiplying the first equation \ref{boltzmann-approximate-lions} by $\phi_1$, integrating by parts, then we have
\begin{align*}
\begin{split}
& \int_\mathcal{O} f^n(0) \phi_1(0) dv dx - \int_\mathcal{O} f^n(T) \phi_1(T) dv dx  - \int_{0}^{T} \int_\mathcal{O} f^n(s) \partial_t \phi_1(s) dv dx ds\\
& = \int_{0}^{T} \int_\mathcal{O} f^n(s) v \cdot \nabla \phi_1(s) dv dx ds - \int_{0}^{T} \int_\Sigma \gamma f^n(s)   \phi_1(s) v \cdot \mathrm{n} dv d\sigma_x ds
\end{split}
\end{align*}
As
\[ (1+ |v|) f^n \to (1 + |v|) f,~~\text{in}~~ L^1\big((0,T) \times \Omega; dv dx ds\big), \]
and
\[ \gamma_\pm f^n \to \gamma_\pm f,~~\text{in}~~ L^1\big((0,T) \times \Sigma_\pm; d \mu dx ds\big), \]
taking $n\to \infty$, we have
\begin{align*}
\begin{split}
& \int_\mathcal{O} f(T) \phi_1(T) dv dx - \int_\mathcal{O} f(T) \phi_1(T) dv dx  - \int_{0}^{T} \int_\mathcal{O} f(s) \partial_t \phi_1(s) dv dx ds\\
& = \int_{0}^{T} \int_\mathcal{O} f(s) v \cdot \nabla \phi_1(s) dv dx ds - \int_{0}^{T} \int_\Sigma \gamma f(s)   \phi_1(s) v \cdot \mathrm{n} dv d\sigma_x ds.
\end{split}
\end{align*}
Noticing that $\phi_1$ is independent of $v$, then it can be rewritten as
\begin{align*}
\begin{split}
& \int_\Omega \phi_1 \cdot \big( \int_{\mathbb{R}^3}f dv \big)(T)  dx - \int_\Omega \phi_1 \cdot \big( \int_{\mathbb{R}^3}f dv \big)(0)  dx  \\
& - \int_{0}^{T} \int_{\partial\Omega} \partial_t \phi_1(s) \cdot \big( \int_{\mathbb{R}^3} fdv \big)(s) dx ds\\
& = \int_{0}^{T} \int_{\Omega} \nabla \phi_1(s) \cdot  \big( \int_{\mathbb{R}^3} f v dv\big)(s) dx ds - \int_{0}^{T} \int_{\partial\Omega}  \mathrm{n} \cdot \big(\int_{\mathbb{R}^3} \gamma fvdv \big)(s) \phi_1 d\sigma_x ds.
\end{split}
\end{align*}
If $\phi_1(s)|_{\partial\Omega} = 0$ for any $ 0 \le s \le T$, we can conclude that the following holds in the distribution sense
\begin{align*}
\partial_t \int_{\mathbb{R}^3} f^n(t)\mathrm{d}v + \nabla \cdot \int_{\mathbb{R}^3} v f^n  \mathrm{d}v =0.
\end{align*}
For the general case, by the Green formula, the trace of $\int_{\mathbb{R}^3} v f^n(t)dv$	
\begin{align*}
\begin{split}
\mathrm{n} \cdot \gamma\big( \int_{\mathbb{R}^3} v f^n(t)dv \big) &  = \mathrm{n} \cdot \big(\int_{\mathbb{R}^3} \gamma fv \mathrm{d}v \big)(t)\\
& =  \big(\int_{\mathbb{R}^3} \gamma f v \cdot  \mathrm{n}  \mathrm{d}v \big)(t)\\
& = \big(\int_{\mathbb{R}^3} \gamma_+ f \mathrm{d} \mu  \big)(t) - \big(\int_{\mathbb{R}^3} g \mathrm{d} \mu  \big)(t)
\end{split}
\end{align*}
\subsubsection{Local momentum conservation law}Different with the local conservation law, we need to add some defect measure to deduce the local momentum conservation law. Similarly, multiplying the fist equation of \eqref{boltzmann-approximate-lions} by $ v \phi_1$ with $\phi_1 \in \mathcal{D}((0,T)\times\Omega )$, we have
\begin{align*}
\begin{split}
  \int_{0}^{T} \int_\Omega \partial_t \phi_1(s) \int_{\mathbb{R}^3} v f^n(s)  \bd v \bd x \bd s +  \int_{0}^{T} \int_\Omega \nabla \phi_1(s) \int_{\mathbb{R}^3} f^n(s) v\otimes v \bd v \bd x \bd s = 0
\end{split}
\end{align*}
Recalling that \[ v f^n \rightharpoonup v f,~ \text{in},~ L^\infty((0,T); L^1(\mathcal{O}));~~ f^n \to f, a.e,  \]
by Vitalli convergence theorem, we can deduce
\[ v f^n \to v f,~ \text{in}, ~ L^\infty((0,T); L^1(\mathcal{O})). \]

Thus,
\[  \int_{0}^{T} \int_\Omega \partial_t \phi_1(s) \int_{\mathbb{R}^3} v f^n(s)  \bd v \bd x \bd s  \to  \int_{0}^{T} \int_\Omega \partial_t \phi_1(s) \int_{\mathbb{R}^3} v f(s)  \bd v \bd x \bd s.  \]

The only things at our disposal are \eqref{conservations}
and
\[ f^n \to f, \text{in},~ L^1((0,T)\times\mathcal{O}). \]
With these estimates, we can only prove that there exist  distribution-value matrix $\mathrm{M}$ with $\mathrm{M}_{i,j}( i, j =1, 2, 3) \in   D'(0,T)\times\Omega$  such that while $n \to \infty$
\begin{align*}
\int_{0}^{T} \int_\mathcal{O} f^n v\otimes v \cdot \nabla \phi_1 \bd v \bd x \bd s.   \to  \int_{0}^{T} \int_\mathcal{O}   (f v\otimes v ) \cdot \nabla \phi_1 \bd v \bd x \bd s. + <\mathrm{M}, \nabla \phi_1 >.
\end{align*}

All together, we conclude the local conservation law of momentum.

\end{document}